\numberwithin{equation}{section}
\newtheorem{Thm}{Theorem}[section]
\newtheorem{Lem}{Lemma}[section]
\newtheorem{Rem}{Remark}[section]
\begin{document}
\title{Stackelberg games with the third party}
\thanks{Acknowledgements: This work is supported by the NSFC under the grands 12271269 and supported by the Fundamental Research Funds for the Central Universities.}
\author{Yiming Jiang}
\address{School of Mathematical Sciences and LPMC\\ Nankai University\\ Tianjin 300071 China}
\email{ymjiangnk@nankai.edu.cn}
\author{Yawei Wei}
\address{School of Mathematical Sciences and LPMC\\ Nankai University\\ Tianjin 300071 China}
\email{weiyawei@nankai.edu.cn}
\author{Jie Xue}
\address{School of Mathematical Sciences \\ Nankai University\\ Tianjin 300071 China}
\email{1120200032@mail.nankai.edu.cn}
\keywords{Mean field games; Stackelberg equilibrium; Time inconsistency; Learning-by-doing}
	
\subjclass[2020]{49N10; 49N80; 49N90}
	
	
\begin{abstract}
In this paper, we introduce the third party to achieve the Stackelberg equilibrium with the time inconsistency in three different
Stackelberg games, which are the discrete-time games, the dynamic games, and the mean field games. Here all followers are experiencing learning-by-doing. The role of a third party is similar to industry associations, they supervise the leader's implementation
and impose penalties for the defection with the discount factor. Then we obtain different forms of discount factors in different models
and effective conditions to prevent defection.
These results are consistent and the third party intervention is effective and maneuverable in practice.
\end{abstract}

\maketitle	

\section{INTRODUCTION}
\subsection{Research history and motivations}
The model of learning-by-doing has often been identified as one of the important sources of economic growth. During the production process, knowledge as the by-product may improve the technique level of the producer, which is clearly different from that in the research institutions. The earliest work about learning-by-doing model can be traced back to Arrow \cite{55} in 1962. Later Sheshinski \cite{56} and Romer \cite{57} introduced this idea in the neoclassical growth model with optimizing agents. 
 Benchekroun et al. \cite{12} discussed a duopoly market in one-shot Stackelberg games. 

Stackelberg games are hierarchical ones where there are several leaders with a dominant position over the followers, which was first proposed by Stackelberg in \cite{27}. There are many scholars studied the Stackelberg game, see Bensoussan et al. \cite{51} and Moon \cite{52}.
For mean field Stackelberg games, the mean field behavior of the followers will depend on an arbitrary strategy of the leader. Mean field games (briefly, MFG) theory is devoted to the analysis of differential games with infinitely many players. This theory has been introduced by Lasry and Lions in 2006 \cite{1} \cite{2}. At about the same time, Huang et al. \cite{3} solved the large population games independently. 
The MFG problem with major and minor players was considered in Moon and Bassar \cite{9}, 
 Nguyen and Huang \cite{32}. The mass behavior of the minor players is strongly affected by the major players through the cost function and stochastic differential equations (briefly, SDE). Here SDE is driven by the white noise, but not limited to white noises, see Bo and Yang \cite{102}.

The time inconsistency is an obviously important problem for Stackelberg games, which makes Stackelberg equilibrium inappropriate.
Dating back to Strotz \cite{30}, there are three approaches to handle the time inconsistency: precommitment, equilibrium and naive strategies.
 A precommitment strategy optimizes the objective functional anticipated at the very beginning time and the leader sticks with this strategy over the whole game period, see Zhou and Li \cite{2000}. An equilibrium strategy consistently optimizes the objective functional  anticipated at every time point in the similar manner of dynamic programming but using the concept of subgame perfect equilibrium, see Pun \cite{Pun}. A naive strategy is such that, at any time, it coincides with the optimal strategy with respect to the preference at that time. In fact, the theory asserts that, smart followers will not believe any promises made by the leader, and therefore the leader cannot adopt the naive strategy in this way.

 However, there are some disadvantages to precommitment and equilibrium strategies. On the one hand, the precommitment strategy is sometimes
 inferior because its strong commitment leads to time inconsistency in efficiency, see Cui et al. \cite{2010}, and its error-accumulation
 property brings huge estimation errors, see Chiu et al. \cite{2017}. On the other hand, since precommitment is not always a feasible solution to the problem  of intertemporal conflict, the leader with insight into his future unreliability may adopt an equilibrium strategy and reject any plan which  he will not follow through. The leader's goal is then to find the best plan among those that he will actually follow. Although the equilibrium strategy  is time consistent, it emphasizes local optimality which is always not globally optimal.

 Compared with precommitment and equilibrium strategies, the Stackelberg equilibrium strategy sometimes enables to gain more benefits for all players.
 Due to the time inconsistency of Stackelberg equilibrium, followers find that the risk of trusting the leader's commitment is significant,
 and the leader also finds it difficult to withstand the temptation in the future. Introducing a third party is a good approach for leaders and followers in the Stackelberg game, even if players have to pay a certain fee. This idea is inspired by the trigger strategy for Nash games in  Dockner et al. \cite{11}. In \cite{11}, to keep the Nash equilibrium, every player has to agree to follow a certain target path and sustain this agreement with an effective threat to regulate any defector.

This motivated us to introduce the third party in the Stackelberg games with time inconsistency. Here the duty of the third party is to enable all players to keep the Stackelberg equilibrium strategy. In this paper, the Stackelberg game with the third party is set as follows. First,  the followers need to pay  part of the fee and the leader needs to pay a certain deposit to begin with the game. Second, the third party will supervise the leader's execution during the game, and impose mandatory penalties in the event of his defection. The appreciated penalties by the third party can ensure Stackelberg equilibrium is credible. Thus it provides a theoretical basis for the game rules. Third, the leader's deposit is returned and the followers pay the remaining fee at the end of the game. Note that the Stackelberg equilibrium with the third party can be regarded as a modified naive strategy. This discussion is similar to an inverse game problem, except that the third party here is not all players. A similar role to the third party is an industry association, government department, or banking institution.

\subsection{Statement of the problems and main results}
In this subsection, we first set up three different Stackelberg games, which are the discrete-time games, the dynamic games and the MFG. In the Stackelberg games, the leader announces his optimum strategy in advance by taking into account the rational reactions of the followers, where the followers are experiencing learning-by-doing with fewer knowledge. Then followers choose their optimum strategies non-cooperatively based on the leader's strategy. Finally, the leader comes back and implements his announced strategy. Hence we obtain the resulting optimum strategies for the leader and followers by the Pontryagin maximum principle, which forms the Stackelberg equilibrium. In fact, as a rational player, the leader will deviate from his commitment without the followers' knowledge and we obtain the defection strategy. Hence the Stackelberg equilibrium is time inconsistent.
 Next, we introduce the third party with appreciate mandatory penalties for the leader's payoff in the games. Finally, we obtain the corresponding condition to effectively prevent defection, which makes the Stackelberg equilibrium be credible.

{\bf{Model 1: Discrete-time Stackelberg games model.}} This model is a repeated oligopoly Stackelberg game, in which the same game is played in each period. The repeated game here consists of the following static game as the original game, which comes from \cite{12}.
 We consider a duopoly market consisting of firm-0 and firm-1 competing as Stackelberg rivals, which produce a homogeneous good.
  Two firms repeat the duopoly games $N$ times and the period is $[0,T]$. They both move one shot                                                                                  at the beginning of each period. Denote $u_{i}(t)$ as
   the output of the firm-$i$, $i\in\{0,1\}$ at time t, $x_{i}(t)$ as his stock of knowledge, and $c_{i}$ as his unit cost.
   Assume that the stock $x_i\in[0,1]$, and the inverse demand function $K$ satisfies
$K=a-b(u_{1}+u_{0}),$  where $a$ and $b$ are positive constants. As the leader, firm-0 has complete knowledge and  dominates the other. Hence we assume that
\begin{equation}\label{x0}
\dot{x}_0(t)=0,
\end{equation}
with initial stocks $x_0(0)$, and $c_0\leq c_1$. It implies that his stock of knowledge is invariant, and his unit cost is the lowest than others. For the followers, firm-1 is experiencing learning-by-doing and his knowledge subjects to
\begin{equation}\label{9.2}
\dot{x}_{1}(t)=u_{1}(t)-\delta x_{1}(t),
\end{equation}
with the initial condition $ x_{1}(0)=0$, where $\delta>0$ is the rate of depreciation of knowledge.
Denote the classes of admissible controls for the firm-$i$ by $\mathcal {U}_i$, $i\in\{0,1\}$.

Now we consider a pair of optimal control problems on $[0,T]$ as follows.
\begin{equation}\label{9.1}\tag{{\bf{P1}}}
\begin{split}
 \text{Leader's\ problem:}&\max_{u_{0}\in\mathcal{U}_0} J_{0}(u_{0})=\max_{u_{0}\in\mathcal{U}_0}\big\{u_{0}(t)(a-b(u_{1}(t)+u_{0}(t))-c_{0})\big\},\\
 \text{Follower's\ problem:}& \max_{u_{1}\in\mathcal{U}_1} J_{1}(u_{1})=\max_{u_{1}\in\mathcal{U}_1}\big\{u_{1}(t)(a-b(u_{1}(t)+u_{0}(t))-c_{1})\big\}.
 \end{split}
\end{equation}

\noindent Denote \begin{itemize}
  \item the Stackelberg equilibrium by $(u^*_0,u^*_1)$, the defection strategy by $(\hat{u}_0,u^*_1)$.
  \item the payoff of firm-0 with the strategy $u_0^*$ in the $n^{th}$ game by $J^*_{0}(n)$.
  \item the payoff of firm-0 with the defection strategy $\hat{u}_0$ in the $n^{th}$ game by $\hat{J}_{0}(n)$.
  \item the extra payoff of $\hat{u}_0$ than $u_0^*$ in the $n^{th}$ game by $\Delta J_{0}(n):=\hat{J}_{0}(n)-J^*_{0}(n)$.
  \item the actual payoff of firm-0 with the punishment by the third party in the $n^{th}$ game by $\tilde{J}_{0}(n)$.
\end{itemize}

In the next section, we find that the leader can get extra gain with less production after defection. It means that the time inconsistency of the Stackelberg equilibrium is originated from the structure of the above Stackelberg games itself. Hence we introduce the third party in discrete-time Stackelberg games with the following  punishment rules. If the leader defects with $\hat{u}_0$ from the $m^{th}$ game, then the third party will discount his payoff in the $(m+1)^{th}$ game, and $\tilde{J}_{0}(n)$ is given by
\begin{equation}\label{9-1}
\tilde{J}_{0}(n)=\rho(n)(-bu_{0}^2+(a-bu_{1}-c_{0})u_{0}),
\end{equation}
where $\rho$ is a ``discount'' factor, which is related to time $n$.

Furthermore, we choose $\rho$ in the linear form with respect to $\Delta J_{0}(n)$ as
\begin{equation}\label{9.0}
\rho(n)=\begin{cases}
1, &\text{if}\  1\leq n \leq m-1, \\
\rho(n-1)-k\Delta J_{0}(n-1),  &\text{if}\ m\leq n \leq N,
\end{cases}
\end{equation}
where $k\in\big(0,(\Delta J_{0}(1))^{-1}\big)$. 
 Then we obtain the following result.
\begin{Thm}\label{1.1}
In the discrete-time Stackelberg games \eqref{9.1}, two firms repeat the above duopoly games $N$ times. Given the discount factor as \eqref{9.0} by the third party, then there exists a constant $k>0$ satisfies
\begin{equation}\label{con1}
(1-k\Delta J_{0}(1))^{N-m+1}\geq1-\frac{8}{9}k\Delta J_{0}(1)(N-m+1),
\end{equation}
such that  the payoff after defection must be less than the payoff with the Stackelberg equilibrium strategy, that is, $\sum_{n=1}^N\tilde{J}_0(n)\leq NJ^*_0(1)$.
Thus the firm-0 will keep the Stackelberg equilibrium during the entire game.
\end{Thm}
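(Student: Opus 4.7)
My plan is to reduce the theorem to a one-stage Stackelberg calculation combined with summing a geometric series. First, I would observe that the repeated game is stationary: the leader's stock $x_0$ is constant by \eqref{x0}, and the stage payoff in \eqref{9.1} depends only on the two controls and the fixed parameters $a,b,c_0,c_1$. Hence $J_0^*(n)=J_0^*(1)$, $\hat J_0(n)=\hat J_0(1)$, and $\Delta J_0(n)=\Delta J_0(1)$ for every $n$, and the whole proof reduces to analyzing one stage game and unrolling the recursion \eqref{9.0}.

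For the stage game I would apply the first-order conditions. The follower's reaction $u_1(u_0)=(a-bu_0-c_1)/(2b)$ substituted into the leader's objective yields
\[
u_0^*=\frac{a+c_1-2c_0}{2b},\qquad J_0^*(1)=\frac{(a+c_1-2c_0)^2}{8b}.
\]
The defection problem (maximizing over $u_0$ with $u_1^*$ frozen) gives $\hat u_0=\tfrac{3}{4}u_0^*$ and $\hat J_0(1)=9(a+c_1-2c_0)^2/(64b)$, which crystallizes into the identity that drives the theorem:
\[
\hat J_0(1)=\tfrac{9}{8}J_0^*(1),\qquad \Delta J_0(1)=\tfrac{1}{8}J_0^*(1).
\]
The ratio $9/8$ is exactly what forces the coefficient $8/9$ in \eqref{con1}, so pinning it down is the key conceptual milestone.

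Third, I would unroll the discount rule \eqref{9.0}. Reading it as a compounding rule with constant $\Delta J_0(n)=\Delta J_0(1)$ yields the geometric sequence $\rho(n)=(1-k\Delta J_0(1))^{n-m}$ on $\{m,\dots,N\}$, and the restriction $k\in(0,\Delta J_0(1)^{-1})$ keeps each $\rho(n)\in(0,1]$. The closed-form geometric sum then gives the total realized payoff under defection,
\[
\sum_{n=1}^{N}\tilde J_0(n)=(m-1)J_0^*(1)+\hat J_0(1)\cdot\frac{1-(1-k\Delta J_0(1))^{N-m+1}}{k\Delta J_0(1)},
\]
and demanding this be at most $NJ_0^*(1)$, dividing through, and substituting $\hat J_0(1)/J_0^*(1)=9/8$ rearranges to exactly \eqref{con1}. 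Existence of a valid $k$ then follows by continuity: at $k=\Delta J_0(1)^{-1}$ the left side of \eqref{con1} equals $0$ while the right side equals $1-\tfrac{8}{9}(N-m+1)\le -\tfrac{7}{9}$ for $N-m+1\ge 2$, so \eqref{con1} holds on a nonempty sub-interval of $(0,\Delta J_0(1)^{-1})$.

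The step I expect to be the main obstacle is extracting the clean $9/8$ payoff ratio and keeping the recursion for $\rho$ in the right compounding form so that it is geometric rather than arithmetic; once those two ingredients are in place, the stated condition \eqref{con1} drops out of the comparison $\sum_{n=1}^N\tilde J_0(n)\le NJ_0^*(1)$ essentially by algebra, and the continuity argument for existence of $k$ is straightforward.
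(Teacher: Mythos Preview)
Your proposal is correct and follows essentially the same route as the paper: derive the stage-game ratio $\hat J_0(1):J_0^*(1):\Delta J_0(1)=9:8:1$, interpret the recursion \eqref{9.0} multiplicatively so that $\rho(n)=(1-k\Delta J_0(1))^{n-m}$, sum the geometric series, and rearrange to \eqref{con1}. The only points the paper adds are an induction to justify the geometric form of $\rho$ (reading $\Delta J_0(n-1)$ in \eqref{9.0} as the \emph{discounted} extra gain $\rho(n-1)\Delta J_0(1)$) and a separate treatment of the terminal case $m=N$ via the deposit, so be sure your write-up makes the compounding interpretation explicit.
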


{\bf{Model 2: Dynamic Stackelberg games model.}} Now we extend the discrete-time case to the dynamic model.
Assume that the game period is $[0,T]$, $T<\infty$ and $r>0$ is the rate of discount. The unit cost of the follower is given by
\begin{equation}\label{1.11}
  c_{1}(x_{1}(t))=\begin{cases}
\bar{c}_{1}-\gamma x_{1}(t), &\text{if}\  x_{1}(t)< {\gamma}^{-1}(\bar{c}_{1}-c_{0}), \\
c_{0},                        &\text{if}\ x_{1}(t)\geq {\gamma}^{-1}(\bar{c}_{1}-c_{0}),
\end{cases}
\end{equation}
where $\gamma>0$ is the parameter describing the productivity of knowledge capital. Here we assume that the knowledge of the firm-1 can not exceed the level of the firm-0, that is $\gamma\leq\bar{c}_{1}$, since $x_1(t)\in[0,1]$. There is a pair of optimal control problems
\begin{equation}\label{1.10}\tag{{\bf{P2}}}
\begin{split}
  &\max_{u_{0}\in\mathcal{U}_0} J_{0}(u_{0})=\max_{u_{0}\in\mathcal{U}_0}\int^T_0 e^{-rt}\big\{u_{0}(t)(a-b(u_{1}(t)+u_{0}(t))-c_{0})\big\}dx,\\
  &\max_{u_{1}\in\mathcal{U}_1} J_{1}(u_{1})=\max_{u_{1}\in\mathcal{U}_1} \int^T_0 e^{-rt}\big\{u_{1}(t)(a-b(u_{1}(t)+u_{0}(t))-c_{1})\big\}dx.
 \end{split}
\end{equation}

Now we introduce the third party into dynamic Stackelberg games, the ``discount'' factor denoted as $\rho(t)$, where $t$ is the duration of defection. Assume that the leader defected at $t_{0}\in[0,T]$ by $\hat{u}_{0}$, then his payoff is given by
\begin{align}\label{3.4}
 \tilde{J}_0&=\int^{t_{0}}_0 e^{-rt}\big\{u^*_{0}(t)(a-b(u^*_{0}(t)+u^*_{1}(t))-c_0)\big\}dt\\\nonumber
&\quad+\int^T_{t_{0}}e^{-rt}\rho(t)\big\{\hat{u}_{0}(t)(a-b(\hat{u}_{0}(t)+u^*_{1}(t))-c_0)\big\}dt.
\end{align}

Here the discount $\rho(t)$ satisfies
 \begin{equation}\label{rho}
\rho'(t)\leq0,\  \rho''(t)\geq0,\ \text{and}\ \rho(0)=1,\  \rho(t)\in[0,1].
 \end{equation}
\begin{Rem}\label{Rem1.1}
We give the economic explanation of the conditions \eqref{rho}.
  Before the defection, the leader must not be punished, that is $\rho(0)=1$. If the leader honours his commitment during the entire game, then the discount is equal to 1. Otherwise, his actual payoff will be discounted by the third party, hence $\rho\in[0,1]$. On the one hand, the longer time of defection lasts, the larger discount factor becomes, that is, $\rho'(\tau)\leq0$. On the other hand, $-\rho(\tau)$ can be regarded as a special production function satisfies Inada conditions, such that $\rho''(\tau)\geq0$. Naturally, $\rho(\tau)\geq0$, indeed, the leader can stop production when $\rho$ is negative.
\end{Rem}
Further we choose the discount $\rho$ in the exponential form with respect to $t$ as
\begin{equation}\label{r2}
\rho(t)=\begin{cases}
1, &\text{if}\  0<t\leq t_0, \\
e^{-k(t-t_0)},  &\text{if}\ t_0<t\leq T,
\end{cases}
\end{equation}
where $k>0$ and $t_0$ is the defection time. Then we obtain the following result.

\begin{Thm}\label{1.2}
In the dynamic Stackelberg games \eqref{1.10},  given the discount factor as \eqref{r2}, then there exists a constant $k>0$ satisfies
\begin{align}\label{equ110}
\begin{split}
&\quad\frac{A_{1}(e^{(s_{1}-r)T}-1)}{s_{1}-r}+\frac{A_{2}(e^{(s_{2}-r)T}-1)}{s_{2}-r}
+\frac{A_{3}(e^{(2s_{1}-r)T}-1)}{2s_{1}-r}\\
&+\frac{A_{4}(e^{(2s_{2}-r)T}-1)}{2s_{2}-r}+\frac{A_{5}(e^{(s_{1}+s_{2}-r)T}-1)}{s_{1}+s_{2}-r}
-\frac{A_{6}(e^{-rT}-1)}{r}\\
&+\frac{A_{7}(e^{(s_{1}-k-r)T}-1)}{s_{1}-k-r}+\frac{A_{8}(e^{(s_{2}-k-r)T}-1)}{s_{2}-k-r}+\frac{A_{9}(e^{(2s_{1}-k-r)T}-1)}{2s_{1}-k-r}\\
&+\frac{A_{10}(e^{(2s_{2}-k-r)T}-1)}{2s_{2}-k-r}+\frac{A_{11}(e^{(s_{1}+s_{2}-k-r)T}-1)}{s_{1}+s_{2}-k-r}-\frac{A_{12}(e^{-(k+r)T}-1)}{k+r}\geq0,
\end{split}
\end{align}
such that  $\tilde{J}_0<J^*_0$.
Thus the firm-0 will keep Stackelberg equilibrium during the entire game. Here $s_1, s_2,A_n,\ n\in\{1,2,\cdots,12\}$ are constants given in the appendix.
\end{Thm}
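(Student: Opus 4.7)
\textbf{Proof plan for Theorem \ref{1.2}.} The strategy is to derive the Stackelberg equilibrium $(u_0^*,u_1^*)$ and the defection response $\hat u_0$ explicitly via Pontryagin's maximum principle, compute the payoffs $J_0^*$ and $\tilde J_0$ in closed form, and then recognize the left-hand side of \eqref{equ110} as the difference $J_0^* - \tilde J_0$; the existence of an admissible $k$ will follow by an asymptotic argument.

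First, I would solve the game by backward induction on the hierarchy. Working in the learning regime $x_1 < \gamma^{-1}(\bar c_1 - c_0)$, the follower's Hamiltonian gives the best response $u_1 = (a - b u_0 - \bar c_1 + \gamma x_1 + e^{rt}p_1)/(2b)$, with $p_1$ determined by the adjoint ODE and $p_1(T)=0$. Substituting this response into the leader's own Pontryagin conditions produces a coupled linear ODE system with constant coefficients whose characteristic polynomial has two roots $s_1, s_2$. Together with $x_1(0)=0$ and the transversality conditions at $T$, one obtains closed-form expressions for $u_0^*(t)$ and $u_1^*(t)$ as affine combinations of $1$, $e^{s_1 t}$, and $e^{s_2 t}$. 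During defection the follower is not informed and continues to implement $u_1^*$; pointwise concavity of the leader's instantaneous profit in $u_0$ then yields $\hat u_0(t) = (a - b u_1^*(t) - c_0)/(2b)$, again an affine combination of the same exponentials.

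Next, I would substitute these forms into \eqref{1.10} and \eqref{3.4}. Using $\int e^{(\alpha - r)t}\,dt = e^{(\alpha-r)t}/(\alpha - r)$ together with the analogous antiderivative when $\rho(t) = e^{-k(t-t_0)}$ is present, the integrands decompose into sums of exponentials with exponents in $\{-r,\, s_i - r,\, 2s_i - r,\, s_1 + s_2 - r\}$ on the undiscounted side and the same exponents shifted by $-k$ on the $\rho$-discounted side. Collecting these produces the twelve summands of \eqref{equ110}, with $A_1,\ldots,A_6$ coming from the undiscounted portion and $A_7,\ldots,A_{12}$ from the $\rho$-discounted portion, so that \eqref{equ110} is exactly the identity $J_0^* - \tilde J_0 \geq 0$. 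To conclude, I would let $k \to \infty$: for each fixed exponent $\alpha \in \{0, s_1, s_2, 2s_1, 2s_2, s_1 + s_2\}$ one has both $(\alpha - k - r)^{-1} \to 0$ and $e^{(\alpha - k - r)T} \to 0$, so the last six terms of \eqref{equ110} vanish in the limit. The surviving quantity is $J_0^* - \int_0^{t_0} e^{-rt} u_0^*(a - b(u_0^* + u_1^*) - c_0)\,dt = \int_{t_0}^T e^{-rt} u_0^*(a - b(u_0^* + u_1^*) - c_0)\,dt \geq 0$, and strictly positive in the generic case where the leader produces with positive margin along the equilibrium path. Continuity of the left-hand side in $k$ therefore supplies a threshold beyond which \eqref{equ110} holds.

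The main obstacle will be the explicit diagonalization of the coupled Hamiltonian system and the careful identification of the twelve coefficients $A_j$; once these are in hand, the existence of $k$ is a soft continuity-plus-asymptotics argument. A technical subtlety is the piecewise nature of $c_1$ in \eqref{1.11}: I would verify a posteriori that the equilibrium trajectory stays in the learning regime $x_1(t) < \gamma^{-1}(\bar c_1 - c_0)$ throughout $[0,T]$, failing which one must patch solutions across the switching time.
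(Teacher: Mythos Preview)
Your overall strategy---derive the equilibrium and defection via Pontryagin, recognize the left side of \eqref{equ110} as (a constant multiple of) $J_0^*-\tilde J_0$, then let $k\to\infty$ so that the $\rho$-weighted terms vanish and invoke continuity---is exactly the paper's approach. The paper first reduces by time translation to $t_0=0$ and then argues, just as you do, that $\rho\to 0$ forces $\tilde J_0\to 0$, so \eqref{equ110} holds for $k$ large; the explicit twelve-term form comes from substituting the closed-form $x_1^*,\lambda^*$ of \eqref{209} into \eqref{3.3} and \eqref{3.2} and integrating the resulting exponentials.

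There is, however, one point where your derivation departs from the paper's in a way that matters for the formulas. In Lemma~\ref{002} the follower's reaction is the \emph{static} best reply $u_1=R_1(u_0)=(a-bu_0+\gamma x_1-\bar c_1)/(2b)$, with no follower adjoint term. Only the leader carries a costate $\lambda$ (attached to $x_1$), and it is the pair $(x_1,\lambda)$ that satisfies the $2\times 2$ autonomous linear system whose eigenvalues are $s_1,s_2$. If you instead endow the follower with his own costate $p_1$ as you propose, the leader---who in a genuine open-loop Stackelberg formulation must anticipate the follower's adjoint dynamics---acquires additional adjoints and the coupled system becomes at least four-dimensional; you would then have more than two characteristic exponents and the decomposition into the twelve terms of \eqref{equ110} with the appendix constants $A_j$ would not go through. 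For the theorem as stated you should drop the $e^{rt}p_1$ term from the follower's response and work with the paper's $2\times 2$ system; once that adjustment is made, your plan coincides with the paper's proof.
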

{\bf{Model 3: Mean field Stackelberg games model.}} In the market, there are a large number of minor firms-$j$, $j\in\{1,\cdots,N\}$. The major firm-0 as a leader has a significant impact on the others who are the followers.
 Although each minor firm could not affect the major firm, the average state of the minor firms, denoted by $\bar{x}(t)$,
 has a great impact on the major one. Denote the knowledge stocks of firm-$i$, $i\in\{0,1,\dots,N+1\}$ by $x_i$ satisfying
\begin{equation}\label{equ-51}
dx_{0}=\left(A_{0}x_{0}+B_{0}u_{0}+C_{0}\bar{x}\right)dt+\sqrt{x_0(1-x_0)}dW^{0}_{t},\quad x_0(0)=x^0_0,
\end{equation}
\begin{equation}\label{equ-52}
  dx_{j}=\left(Ax_{j}+Bu_{j}+C\bar{x}+Dx_{0}\right)dt+\sqrt{x_j(1-x_j)}dW^{j}_{t},\quad x_j(0)=x^0_j,
\end{equation}
where $u_i$ is the output of the firm-$i$, Brownian motions $\{W_{t}^{0}\}_{t\in[0,T]}$ and $\{W_{t}^{j}\}_{t\in[0,T]}$ are independent, standard one dimensional Brownian motions. Others parameters are constants. 
The fact that the expectation of stochastic integral with respect to
martingale is zero will help us to solve the problems. Then there is a pair of optimal control problems as follows.
\begin{equation}\label{equ-53}\tag{{\bf{P3}}}
\begin{split}
 & \max_{u_{0}\in\mathcal{U}_0} J_{0}(u_{0})=\max_{u_{0}\in\mathcal{U}_0}\mathbf{E}\bigg[\int^{T}_{0}e^{-rt}\left\{-a_{0}u^{2}_{0}
 +(x_{0}-l_{0}\bar{x}+b_{0})^{2}\right\}dt\bigg],\\
 &\max_{u_{j}\in\mathcal{U}_j} J_{j}(u_{j})=\max_{u_{j}\in\mathcal{U}_j}\mathbf{E}\bigg[\int^{T}_{0}e^{-rt}\big\{-au^{2}_{j}
 +\left(x_{j}-l\bar{x}
 +b_{j}\right)^{2}-2\sigma u_{0}u_{j}\big\}dt\bigg],
\end{split}
\end{equation}
where $l_0,l,b_0,\sigma$ are some positive parameters.
 By the Pontryagin maximum principle and variational method, we obtain Stackelberg equilibrium $(u^*_0,u^*_1)$
given in Lemma \ref{5-2} and defection group $(\hat{u}_0,u^*_1)$ given in Lemma \ref{5-3}.

Now we introduce the third party  with the discount factor $\rho(t)$ given in \eqref{r2}. Assume that the leader defects at $t=0$ by $\hat{u}_0$, then his actual payoff is given by
$$\tilde{J}_0=\mathbf{E}\bigg[\int^{T}_{0}e^{-\tilde{r}t}\left\{-a_{0}\hat{u}_{0}^{2}
+\left(\hat{x}_{0}-l_{0}\bar{x}+b_{0}\right)^{2}\right\}dt\bigg],$$
where $\tilde{r}=r+k$. Then we obtain the following result.
\begin{Thm}\label{1.3}
In the mean field Stackelberg games \eqref{equ-53},  given the discount
factor as \eqref{r2},  if
\begin{equation}\label{5067}
 O(\hat{x}_{0}(t))<O(e^{\frac{\tilde{r}}{2}t}),
\end{equation}
 then there exists a constant $k>0$ such that
$\tilde{J}_0<J^*_0.$
Thus the firm-0 will keep Stackelberg equilibrium.
Here $\hat{x}_0$ is the leader's state after defection, and $O(\cdot)$ represents the order of the growth of $\tilde{r}$.
\end{Thm}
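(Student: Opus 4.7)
My plan is to show that, while $J^*_0$ is a fixed quantity determined only by the equilibrium data from Lemma \ref{5-2} (and hence independent of $k$), the discounted defection payoff $\tilde{J}_0 = \tilde{J}_0(k)$ can be driven arbitrarily small by enlarging $k$, so the desired inequality follows once $k$ is large enough.

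First I would invoke Lemma \ref{5-3} to record the feedback form of $(\hat{u}_0,\hat{x}_0)$ produced by the Pontryagin maximum principle for the leader's single-player problem after defection; this fixes the coefficients of the SDE \eqref{equ-51} governing $\hat{x}_0$, while $\bar{x}$ remains determined by the followers' best response to the announced $u^*_0$. Next I would split
\[
\tilde{J}_0(k) = -a_0\,\mathbf{E}\!\left[\int_0^T e^{-\tilde{r} t}\hat{u}_0^{\,2}\,dt\right] + \mathbf{E}\!\left[\int_0^T e^{-\tilde{r} t}\bigl(\hat{x}_0 - l_0\bar{x}+b_0\bigr)^2\,dt\right],
\]
observing that the control-cost term is non-positive and focusing on the state-reward term. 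The growth condition $O(\hat{x}_0(t))<O(e^{\tilde{r} t/2})$ is tailored so that $e^{-\tilde{r} t}(\hat{x}_0-l_0\bar{x}+b_0)^2$ is dominated by a fixed integrable function on $[0,T]$ and, for each fixed $t>0$, decays to $0$ as $k\to\infty$; dominated convergence then yields $\tilde{J}_0(k)\to 0$. Combined with the strict positivity of $J^*_0$ (which is read off from the explicit equilibrium formulas of Lemma \ref{5-2} together with the non-degeneracy of the quadratic reward $(x_0-l_0\bar{x}+b_0)^2$), this produces a threshold $k_0>0$ such that $\tilde{J}_0(k)<J^*_0$ for every $k\ge k_0$, and any such $k$ proves the theorem.

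The principal technical obstacle will be verifying the growth hypothesis \eqref{5067} against the actual defection dynamics. Since $\hat{x}_0$ depends on $k$ through the adjoint equations that determine $\hat{u}_0$, one must establish a $k$-uniform moment bound for the forward-backward system of Lemma \ref{5-3}, and simultaneously control the mean-field coupling via $\bar{x}$ — which, although frozen at the followers' equilibrium response to $u^*_0$, still interacts with the deviator through the running reward — as $k\to\infty$. A secondary delicate point is the passage to the limit inside the expectation: the $k$-dependence of the drift in \eqref{equ-51} (via $\hat{u}_0$) forbids a direct pointwise application of dominated convergence unless one first produces a $k$-uniform $L^2$ bound on $\hat{x}_0$, and it is precisely here that the hypothesis \eqref{5067} is used as the correct dominating envelope.
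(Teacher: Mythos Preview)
Your proposal is correct and follows the same core strategy as the paper: hold $J^*_0$ fixed (it is determined by the equilibrium data of Lemma~\ref{5-2} and does not involve $k$) and show that $\tilde{J}_0(k)$ can be driven below it by taking $k$ large. The difference lies in how the two pieces of the integrand are handled. You dispose of the control-cost term $-a_0\hat{u}_0^{\,2}$ by its sign and then apply the hypothesis \eqref{5067} directly to the state-reward term via dominated convergence; this is arguably the cleaner route. The paper instead aims at the full integrand: it uses $\hat{u}_0=\tfrac{B_0}{2a_0}\zeta$, takes expectations to kill the martingale terms, and solves the resulting deterministic forward--backward pair by variation of constants to obtain $O(\zeta(t))=O(\hat{x}_0(t))$. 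That identity is the paper's main technical content --- it explains \emph{why} the growth hypothesis may be phrased in terms of $\hat{x}_0$ rather than $\hat{u}_0$ or $\zeta$ --- but your sign observation renders it unnecessary for the bare inequality $\tilde{J}_0<J^*_0$. One caution on your last paragraph: you speak of ``verifying the growth hypothesis \eqref{5067} against the actual defection dynamics'' as the principal obstacle, but \eqref{5067} is an \emph{assumption} of the theorem, not a claim to be established; all that remains is to extract from it the $k$-uniform envelope you need for dominated convergence, which is immediate once \eqref{5067} is read (as the paper intends) as a growth bound in the parameter $\tilde{r}$.
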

\begin{Rem}\label{192}
The condition \eqref{5067} means that the rate of knowledge growth with
the interest rate $r$ should be slower than exponential growth, which is a natural condition.
\end{Rem}

 The main contribution of this paper is to introduce the third party to prevent defection caused by the time inconsistency
 in three different Stackelberg games, which are the discrete-time games, the dynamic games, and the MFG.

 Firstly, the role of the third party is to promote the realization of Stackelberg equilibrium. Once the leader defects,
 the third party will discount his payoff as punishment. The idea of ``discount'' punishment is inspired by the trigger strategy.
 Different from the trigger strategies in Nash games, the third party imposes the punishment rather than other players in Stackelberg games.  It makes punishment immediate without a margin of payoff by delay,
 and the effectiveness of the discount will not even depend on the strategy of charging or sacrificing other followers.

 Secondly, the introduction of the third party is very maneuverable in practice. Compared with precommitment and equilibrium strategies,
  Stackelberg equilibrium with the third party may enable both sides of the game to gain more benefits. Therefore, even if the leader needs to pay  a margin and followers need to pay a certain fee, they will be willing to introduce the third party to promote Stackelberg equilibrium.
  By the game rules we made, we know that the third party can suppress defection by raising the leader's interest rate to a certain level.

 Thirdly, our conclusions are consistent, although different forms of the discount factor are introduced into different models.
 It has a certain theoretical significance.  Unlike the first two games, the explicit expression of optimal control
 cannot be given in the mean field Stackelberg games. Here we give the  appreciated penalties by the third party using the order of the growth of the  interest rate. This condition is reasonable given in Remark \ref{192}.

The rest of the paper is organized as follows. In Section 2, we introduce the third party in discrete-time games
and obtain the result given in Theorem \ref{1.1}, which makes the Stackelberg equilibrium credible. In Section 3, we obtain the Stackelberg equilibrium for the dynamic games by Pontryagin maximum principle. Then we choose an appropriate discount and prove Theorem \ref{1.2}.
 Section 4 focuses on mean field Stackelberg games. Under the mean field approximation, we obtain the Stackelberg equilibrium and defection
  strategy. Then we obtain the appreciated penalties by the third party given in Theorem \ref{1.3}.

\section{DISCRETE-TIME STACKELBERG GAMES}
In this section, we discuss the model of discrete-time Stackelberg games. There is a pair of optimal control problems given in \eqref{9.1} satisfying the state equation \eqref{9.2}. First, we get the Stackelberg equilibrium and the defected strategy of the leader, see Lemma \ref{001}. To make the leader keep his promise, we introduce the third party, and set appropriate mandatory punishment rules, see Theorem \ref{1.1}.
\subsection{Stackelberg equilibrium}
\begin{Lem}\label{001}
In the one-shot Stackelberg games \eqref{9.1}, there is the Stackelberg equilibrium
\begin{equation}\label{1.5}
(u_{0}^*, u_{1}^*)=\bigg(\frac{a+c_{1}-2c_{0}}{2b},\frac{a+2c_{0}-3c_{1}}{4b}\bigg),
\end{equation}
and the payoff of the leader is given by
\begin{equation}\label{1.6}
J_{0}^*(u_{0}^*,u_{1}^*)=\frac{(a+c_{1}-2c_{0})^2}{8b}.
\end{equation}
Furthermore, if the defection is allowed, then  there is an optimal defection strategy
\begin{equation*}
  \hat{u}_{0}=\frac{3(a+c_{1}-2c_{0})}{8b},
\end{equation*}
and the payoff of the leader is given by
\begin{equation}\label{1.8}
  \hat{J}_{0}(1)=\hat{J}_{0}(\hat{u}_{0},u^{*}_{1})=\frac{9}{64b}(a+c_{1}-2c_{0})^2.
\end{equation}
\end{Lem}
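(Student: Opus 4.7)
The plan is to carry out the usual backward induction that defines a Stackelberg equilibrium, since in this one-shot setting the objectives in \eqref{9.1} are strictly concave quadratics in each player's own control, so all optima are interior and are pinned down by the first-order conditions.

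First I would fix an arbitrary leader's output $u_0$ and solve the follower's problem. The map $u_1 \mapsto u_1(a-b(u_1+u_0)-c_1) = -bu_1^2 + (a-bu_0-c_1)u_1$ is a concave quadratic, so its unique maximizer is given by the first-order condition $-2bu_1 + (a-bu_0-c_1)=0$. This yields the reaction function
\begin{equation*}
u_1(u_0) = \frac{a - bu_0 - c_1}{2b}.
\end{equation*}

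Next I would substitute this reaction function into the leader's payoff. A direct simplification gives $J_0(u_0) = \tfrac{1}{2}\bigl[(a+c_1-2c_0)u_0 - bu_0^2\bigr]$, which is again a concave quadratic in $u_0$. Solving its first-order condition delivers $u_0^* = (a+c_1-2c_0)/(2b)$, and plugging back into the reaction function produces $u_1^* = (a+2c_0-3c_1)/(4b)$, matching \eqref{1.5}. Evaluating $J_0$ at $u_0^*$ then gives $(a+c_1-2c_0)^2/(8b)$, which is \eqref{1.6}. (One minor sanity check worth recording is that the admissibility condition $c_0\le c_1$ plus positivity of $a,b$ keeps $u_0^*,u_1^*\ge 0$, so the interior optimum is admissible.)

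Finally, for the defection strategy I would fix the follower at the committed $u_1^*$ and re-optimize the leader's objective over $u_0$ alone. The function $u_0 \mapsto -bu_0^2 + (a-bu_1^*-c_0)u_0$ is once more a concave quadratic; its first-order condition gives $\hat{u}_0 = (a-bu_1^*-c_0)/(2b)$, and substituting the explicit $u_1^*$ reduces this to $\hat{u}_0 = 3(a+c_1-2c_0)/(8b)$. The maximum value of a function of the form $-bu^2 + Cu$ equals $C^2/(4b)$, applied here with $C = 3(a+c_1-2c_0)/4$, yields $\hat{J}_0(1) = 9(a+c_1-2c_0)^2/(64b)$, as in \eqref{1.8}.

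There is no genuine obstacle here — the proof is a compact backward-induction calculation and the only care needed is bookkeeping so that the reaction-function substitution is simplified cleanly before differentiating. The comparison $\hat{J}_0(1) = \tfrac{9}{8}\,J_0^*$ appears automatically from these formulas and is what motivates the third-party mechanism introduced in the sequel.
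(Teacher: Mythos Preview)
Your proof is correct and follows essentially the same backward-induction route as the paper: compute the follower's best reply, substitute into the leader's objective, optimize the resulting concave quadratic, and then repeat the leader's optimization with $u_1^*$ fixed to obtain the defection strategy. The only (inessential) difference is that you spell out the concavity and first-order conditions a bit more explicitly than the paper does.
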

\begin{proof}
First we assume that the leader makes a commitment to carry out the output strategy $u_{0}$ over a number of periods, and  the follower believes this commitment and makes the optimal output strategy as $u_{1}$ under this belief. Based on the leader's strategy $u_{0}$, the rational follower must adopt  the best reply function as follows
\begin{equation}\label{333}
u_{1}^*=R_{1}(u_{0})=\frac{a-bu_{0}-c_{1}}{2b}.
\end{equation}
After observing the output of the follower $u_{1}^*$, the leader solves his optimal output $u^*_0$ by maximizing his payoff
$$\max_{u_0\in\mathcal {U}_0} J_{0}(u_{0},R_{0}(u_{1}))=\max_{u_0\in\mathcal {U}_0}\left\{-\frac{b}{2} u_{0}^2+\frac{1}{2} (a+c_{1}-2c_{0})u_{0}\right\}.$$
It is easy to see that the objective function is the maximum value of the quadratic function of $u_{0}$, then we obtain
$$u_{0}^* =\frac{a+c_{1}-2c_{0}}{2b},\quad u_{1}^* =R_{1}(u_{0}^*)=\frac{a+2c_{0}-3c_{1}}{4b}.$$
If the leader keeps his promise, then we get Stackelberg equilibrium \eqref{1.5}.
Clearly, his payoff is given by \eqref{1.6}.

If the defection is allowed, the follower believes the leader's commitment $u_{0}^*$  and chooses his output. Then the leader deviates from its commitment without the follower's knowledge when the time comes for both firms to produce simultaneously. Further the rational firm-0 must adopt the best reply function
 $$\hat{u}_{0}=R_{0}(u_{1}^*)=\frac{3(a+c_{1}-2c_{0})}{8b} < u_{0}^*,$$
and his payoff is given by \eqref{1.8}. It is easy to see that $\hat{J}_{0}(\hat{u}_{0},u^{*}_{1})>J_{0}^*(u_{0}^*,u_{1}^*)$.
\end{proof}

Note that Lemma \ref{001} implies that if the leader defects, he will earn more payoff with less output. The time inconsistency leads the Stackelberg equilibrium to be incredible. In the following, we introduce the third party in the game.
\subsection{Discount by the third party}
 By comparison, defection can get extra gain 
\begin{eqnarray}\label{equ-11}
\Delta J_{0}(1)= \hat{J}_{0}(\hat{u}_{0},u_{1}^*)
-J^*_{0}(u_{0}^*,u_{1}^*)=\frac{1}{64b}(a+c_{1}-2c_{0})^2>0.
\end{eqnarray}
In the one-shot game, the leader must defect based on the rational assumption of players. Therefore the follower will not believe any commitment made by the leader in this way. In the repeated game, we introduce the third party with the discount factor $\rho(n)$ given in \eqref{9.0}. We emphasize that the third party provides a way to achieve Stackelberg equilibrium with the time inconsistency for finite repeated games. Now we discuss the appreciated penalties of the discount factor.


\begin{proof}[{\bf{Proof of Theorem \ref{1.1}}}]
Firstly, it is clearly that as long as the leader defects within the game, he must keep defecting from a certain game to the end. Otherwise, his actual payoff will be continuously discounted by the third party in advance. Hence we assume that the leader keeps defecting from the $m^{th}$ game and the discount factor satisfies
 $$\rho(1)=\rho(2)=\cdots=\rho(m)=1.$$
 When $m=N$, it implies that in the last time period, the leader wants to defect. Hence the leader  needs to pay the deposit $\Delta J_{0}(1)$ given in \eqref{equ-11} to the third party before the games.
For any integer $1\leq m<n\leq N$, let $\Delta \tilde{J}_{0}(n):=\tilde{J}_{0}(n)-J_{0}(n)$, where $\tilde{J}_{0}(n)$ defined in \eqref{9-1}.
 Then we claim that
$$\rho(n)=(1-k\Delta J_{0}(1))^{n-m},\ \text{and}\ \Delta \tilde{J}_{0}(n)=
\Delta J_{0}(1)(1-k\Delta J_{0}(1))^{n-m}.$$
We prove the claim by induction.  Fixed $m=1$, when $n=2$, we have
\begin{align*}
\rho(2)&=1-k\Delta J_{0}(1),\\
\Delta \tilde{J}_{0}(2)&=
\rho(2)(\hat{J}_{0}(1)-J_{0}^*(1))=(1-k\Delta J_{0}(1))\Delta J_{0}(1).
\end{align*}
The claim holds for $n=2$. Using the result for $n$, we have
\begin{align*}
\rho(n+1)&=\rho(n)-k\Delta\tilde{J}_{0}(n)
=(1-k\Delta J_{0}(1))^{n-m+1},\\
\Delta \tilde{J}_{0}(n+1)&=\rho(n)\Delta J_{0}(1)=(1-k\Delta J_{0}(1))^{n-m}\Delta J_{0}(1).
\end{align*}
Similarly, for any $1\leq m\leq N$, the claim is valid.

It follows from \eqref{1.6}, \eqref{1.8} and \eqref{equ-11} that $\hat{J}_{0}(1)=9\Delta J_{0}(1)$, and $ J_{0}^*(1)=8\Delta J_{0}(1).$
Then the total payoff of leader is given by
$$
 \sum_{n=1}^N\tilde{J}_0(n)=(m-1)J_{0}^*(1)+\sum_{n=m}^{N}\rho(n)\hat{J}_{0}(1).
$$
It is worth noting that, for the firm-0, if his cumulative payoff of $N$ times after defection is less than that of non-defection, that is, $\sum^N_{n=m+1}\tilde{J}_0(n)\leq(N-m)J^*_0(1)$, thus the firm-0 must honour his commitment.
Hence we have
\begin{equation}\label{1.9}
 9\Delta J_0(1)\sum_{n=m}^{N}(1-k\Delta J_{0}(1))^{n-m} \leq 8(N-m+1)\Delta J_0(1),
\end{equation}
and the inequality \eqref{1.9} is converted to
$$(1-k\Delta J_{0}(1))^{N-m+1}\geq 1-\frac{8}{9}(N-m+1)(k\Delta J_{0}(1)).$$
 By numerical method, we solve the transcendental equation for the factor $k$ depending on $N$, and the result follows.
\end{proof}

\section{DYNAMIC STACKELBERG GAMES}
In this section, we discuss the model of dynamic Stackelberg games. There is a pair of optimal control problems given in \eqref{1.10}
satisfying the state equation \eqref{9.2} with \eqref{1.11}. First, we get the Stackelberg equilibrium and the defected strategy
of the leader, see Lemma \ref{002}. Then we introduce the third party and set
appropriate mandatory punishment rules, see Theorem \ref{1.2}.

Without loss of generality, we can set the unit cost of the leader $c_{0}\equiv0$. Indeed, it suffices otherwise to consider $$c_{1}(x_{1}(t))-c_{0}=\begin{cases}
\bar{c}_{1}-c_{0}-\gamma x_{1}(t), &\text{if}\ x_{1}<\frac{\bar{c}_{1}-c_{0}}{\gamma }, \\
0, &\text{if}\  x_{1}\geq\frac{\bar{c}_{1}-c_{0}}{\gamma }.
\end{cases}$$
Hence we just need to discuss the utility functions by supposing that $\tilde{a}:=a-c_{0}$.
The optimal control problems \eqref{1.10} is rephrased as follows. The gain of the firm-1 is to find a control $u_1\in\mathcal {U}_1$, which maximizes his payoff
$$\max_{u_{1}\in\mathcal{U}_1}\int^T_0 e^{-rt}u_{1}(t)\big(a-b(u_{1}(t)+u_{0}(t))+\gamma x_{1}(t)-\bar{c}_1\big)dt,$$
and subjects to \eqref{9.2}. Accordingly, the firm-0 needs to maximizes his payoff
$$\max_{u_{0}\in\mathcal{U}_0}\int^T_0 e^{-rt}\big(-bu^2_{0}(t)+(a-bu_1(t))u_0(t)\big)dt,$$
with the invariable knowledge stock $x_0(0)$. In fact, as a mature firm, his knowledge has reached the highest level, hence it is better to make payoff by selling the knowledge than by improving it expensively, so his knowledge stock satisfies \eqref{x0}.

Note that if $\gamma$ is identically vanishing, it implies that there is no reaction between the unit cost of minor firm and his knowledge stock. This situation corresponds a time period of the discrete-time differential game described in the  last section, which is a special case of the dynamic model.

\subsection{Stackelberg equilibrium}
In this subsection, similar to the discrete-time game, we first solve an open-loop Stackelberg equilibrium. Then if defection is allowed, we obtain the defection strategy.
\begin{Lem}\label{002}
In the dynamic Stackelberg games \eqref{9.1}, let $\Delta:=r^2+4\delta(r+\delta)-\frac{4\gamma}{b}(r+2\delta)$, if $\Delta>r^2$, then there is a Stackelberg equilibrium
\begin{equation}\label{3.1}
(u_{0}^*(t), u_{1}^*(t))=\left(\frac{a+\bar{c}_{1}-\gamma x^*_{1}(t)-\lambda^*_1(t)}{2b},
\frac{a-3\bar{c}_{1}+3\gamma x^*_{1}(t)+\lambda^*_1(t)}{4b}\right),
\end{equation}
satisfying
\begin{align}\label{209}
\begin{split}
\left(
  \begin{array}{cc}
    x^*_{1}(t) \\
    \lambda^*(t)\\
  \end{array}
\right)
=\frac{1}{\sqrt{\Delta}}\left(
                              \begin{array}{c}
\lambda^*(0)(e^{s_{2}t}-e^{s_{1}t})+\frac{\alpha_{2}}{4bs_{1}}(e^{s_{1}t}-1)-\frac{\alpha_{1}}{4bs_{2}}(e^{s_{2}t}-1)\\
\lambda^*(0)(q_{2}e^{s_{2}t}-q_{1}e^{s_{1}t})+\frac{q_{1}\alpha_{2}}{4bs_{1}}(e^{s_{1}t}-1)-\frac{q_{2}\alpha_{1}}{4bs_{2}}(e^{s_{2}t}-1)\\
                              \end{array}
                            \right),
\end{split}
\end{align}
where $s_{1}=\frac{r-\sqrt{\Delta}}{2}$, $s_{2}=\frac{r+\sqrt{\Delta}}{2}$, $q_{i}=4b(s_{i}+\delta-\frac{3\gamma}{4b})$,
 $\alpha_{i}=q_{i}(a-3\bar{c}_{1})-\gamma(a+\bar{c}_{1})$,
 \begin{equation}\label{7.20}
  \lambda^*(0)=\frac{q_{1}\alpha_{2}s_{2}(e^{s_{1}T}-1)-q_{2}\alpha_{1}s_{1}(e^{s_{2}T}-1)}
{4bs_{1}s_{2}(q_{1}e^{s_{1}T}-q_{2}e^{s_{2}T})}.
\end{equation}
The payoff of the leader is given by
\begin{equation}\label{3.3}
 J^*_{0}(u^*_{0})=\int^T_{0}e^{-rt}\frac{(a+\bar{c}_{1}-\gamma x^*_{1}(t))^2-({\lambda}^*(t))^2}{8b}dt.
\end{equation}
If the defection is allowed, then  there is an optimal defection strategy for the leader
\begin{eqnarray}\label{4.4}
\hat{u}_{0}(t)=\frac{3(a+3\bar{c}_{1}-\gamma x^*_{1}(t))-{\lambda}^*(t)}{8b},
\end{eqnarray}
and the payoff of the leader is given by
\begin{equation}\label{3.2}
\hat{J}_{0}(\hat{u}_{0})=\int^T_{0}e^{-rt}\frac{(3(a+\bar{c}_{1}-\gamma x^*_{1}(t))-{\lambda}^*(t))^2}{64b}dt.
\end{equation}
\end{Lem}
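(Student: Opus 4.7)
The plan is to apply the Pontryagin maximum principle (PMP) in two stages, as is standard for open-loop Stackelberg games. For a fixed leader's control $u_0(\cdot)$, I would set up the follower's current-value Hamiltonian
\[H_1 = u_1\bigl(a - b(u_0+u_1) - \bar{c}_1 + \gamma x_1\bigr) + \lambda(u_1 - \delta x_1),\]
and read off from $\partial H_1/\partial u_1 = 0$ the feedback reaction $u_1^R = (a - bu_0 - \bar{c}_1 + \gamma x_1 + \lambda)/(2b)$, with costate $\dot\lambda = r\lambda - \gamma u_1^R$ and transversality $\lambda(T)=0$. Substituting $u_1^R$ into the leader's objective converts his problem into an optimal control problem in the two-state vector $(x_1,\lambda)$; a second application of PMP, after combining the $u_0$-FOC with the two leader adjoints, yields $u_0^* = (a+\bar{c}_1-\gamma x_1-\lambda)/(2b)$, and plugging back into $u_1^R$ gives the equilibrium pair \eqref{3.1}.

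Next I would close the dynamics: inserting the feedback expressions into $\dot x_1 = u_1^* - \delta x_1$ and $\dot\lambda = r\lambda - \gamma u_1^*$ produces an inhomogeneous $2\times 2$ linear ODE for $(x_1,\lambda)$ with constant coefficients. A direct computation shows that the characteristic polynomial has discriminant exactly $\Delta = r^2 + 4\delta(r+\delta) - (4\gamma/b)(r+2\delta)$ and roots $s_{1,2} = (r\mp\sqrt{\Delta})/2$, with the hypothesis $\Delta>r^2$ ensuring both roots are real and well-separated. Diagonalizing against the coefficient matrix's eigenvectors (which generates the constants $q_i$) and applying variation of parameters to the constant forcing (which generates $\alpha_i$) gives the explicit form \eqref{209}. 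The split boundary data $x_1(0)=0$ and $\lambda(T)=0$ leave a single free parameter, $\lambda^*(0)$; evaluating the second row of \eqref{209} at $t=T$ and solving linearly yields \eqref{7.20}. The payoff \eqref{3.3} then follows by direct substitution of $(u_0^*,u_1^*)$ into $J_0$ using the algebraic identity $u_0^*\bigl(a - b(u_0^*+u_1^*)\bigr) = \bigl((a+\bar{c}_1-\gamma x_1^*)^2 - (\lambda^*)^2\bigr)/(8b)$.

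For the defection part, once the follower is committed to $u_1^*(\cdot)$ and because firm-0's own state is frozen by \eqref{x0}, the leader's reoptimization decouples into the pointwise static maxima $\max_{u_0}u_0(a - b(u_0+u_1^*))$; the FOC gives $\hat u_0 = (a - bu_1^*)/(2b)$, which simplifies to \eqref{4.4} after substituting the already-computed $u_1^*$, and inserting into $J_0$ delivers \eqref{3.2}. The main obstacle I anticipate is the bookkeeping in the second PMP application: one must check that when the leader's Hamiltonian carries adjoints for both $x_1$ and $\lambda$, the $u_0$-FOC reduces cleanly to the pointwise form above, and that the resulting four-dimensional Hamiltonian system consistently collapses to the two-dimensional block \eqref{209}. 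This reduction, together with matching the eigenstructure of the coefficient matrix to the split boundary data, is the algebraic heart of the argument, and the hypothesis $\Delta>r^2$ is precisely what guarantees the two-point boundary value problem is nondegenerate.
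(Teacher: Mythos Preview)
Your proposal diverges from the paper at the very first step, and the anticipated ``collapse'' does not occur.  The paper does \emph{not} apply the maximum principle to the follower's problem.  It takes the follower's reaction to be the \emph{static} pointwise best reply
\[
u_1 = R_1(u_0) = \frac{a - b u_0 + \gamma x_1 - \bar c_1}{2b},
\]
with no follower costate at all.  After substituting this into the leader's objective and into $\dot x_1$, the leader faces a standard one-state optimal control problem, and the adjoint $\lambda$ appearing in \eqref{3.1}--\eqref{209} is the \emph{leader's} costate for $x_1$, governed by
\[
\dot\lambda = \Bigl(r+\delta-\tfrac{3\gamma}{4b}\Bigr)\lambda - \tfrac{\gamma^2}{4b}x_1 + \tfrac{\gamma(a+\bar c_1)}{4b},\qquad \lambda(T)=0.
\]
The $2\times 2$ system in the lemma is exactly this pair $(\dot x_1,\dot\lambda)$; there is never a four-dimensional Hamiltonian system to reduce.

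By contrast, in your scheme $\lambda$ is the \emph{follower's} costate (incidentally, its current-value equation should read $\dot\lambda=(r+\delta)\lambda-\gamma u_1$, not $r\lambda-\gamma u_1$), and the leader then carries two additional adjoints $(p_1,p_2)$ for the states $(x_1,\lambda)$.  The leader's first-order condition is
\[
u_0=\frac{a+\bar c_1-\gamma x_1-\lambda - p_1 + \gamma p_2}{2b},
\]
so your claimed formula $u_0^*=(a+\bar c_1-\gamma x_1-\lambda)/(2b)$ would require $p_1\equiv\gamma p_2$.  A direct check against the adjoint equations for $(p_1,p_2)$ shows this identity forces $p_2\equiv 0$ and hence $u_0\equiv 0$, which is absurd.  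Thus the four-dimensional system does \emph{not} collapse to \eqref{209}, and your route would produce different formulas from those in the lemma.  To prove the lemma as stated you must interpret $\lambda$ as the leader's adjoint and use the static follower reply, exactly as the paper does.
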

\begin{proof}
First, we obtain the open-loop Stackelberg equilibrium by using the Pontryagin maximum principle. The optimal strategy of the follower always takes  the best reply function
\begin{eqnarray}\label{equ-12}
u_{1}(t)=R_{1}(u_{0}(t))=\frac{a-b u_{0}(t)+\gamma x_{1}(t)-\bar{c}_{1}}{2b},
\end{eqnarray}
with $$\dot{x}_{1}(t)=\frac{a-bu_{0}(t)+\gamma x_{1}(t)-\bar{c}_{1}}{2b}-\delta x_{1}(t).$$
 Substitute the rival's strategy $u_1(t)$ by \eqref{equ-12}, the leader maximizes his payoff
$$
\max_{u_{0}\in\mathcal {U}_0}\int^T_0 e^{-rt}\left\{-\frac{b}{2}u_{0}^2(t)+\frac{1}{2}(a+\bar{c}_{1}-\gamma x_{1}(t))u_{0}(t)\right\}dt,$$
Using the Pontryagin maximum principle, the Hamiltonian $H$ is given by
$$H
=-\frac{b}{2}u_{0}^2(t)+\frac{1}{2}(a+\bar{c}_{1}-\gamma x_{1}(t)-{\lambda}(t))u_{0}(t)+{\lambda}(t)\left(\frac{a+\gamma x_{1}(t)-\bar{c}_{1}}{2b}-\delta x_{1}(t)\right),$$
where ${\lambda}(t)$ is the adjoint variable associated with the knowledge stock $x_1(t)$, and satisfies the following adjoint equation
$$\dot{\lambda}_1(t)
=(r+\delta-\frac{3\gamma}{4b}){\lambda}(t)-\frac{\gamma^2}{4b}x_{1}(t)
+\frac{\gamma(a+\bar{c}_{1})}{4b},\ \text{and}\ {\lambda}(T)=0.$$

For the leader, his output satisfies maximum condition
 $$\frac{\partial H}{\partial u_{0}}=-bu_{0}(t)+\frac{1}{2}\big(a+\bar{c}_{1}-\gamma x_{1}(t)-{\lambda}(t)\big)=0,$$
then we can get the optimal control
$$u^*_{0}(t)=\frac{a+\bar{c}_{1}-\gamma x_{1}(t)-{\lambda}(t)}{2b},$$
corresponding to the reply function of the follower
\begin{eqnarray}\label{equ-14}
u_{1}^*(t)=R_{1}(u^*_{0}(t))=\frac{a-3\bar{c}_{1}+3\gamma x_{1}(t)+{\lambda}(t)}{4b}.
\end{eqnarray}
Substituting $u_{1}^*(t)$ into state equation and adjoint equation respectively,
we get autonomous equations, which are expressed in the following matrix form
$$
\left(
  \begin{array}{c}
    \dot{x}_{1}(t) \\
    \dot{\lambda}_1(t)\\
  \end{array}
\right)
=\left(
   \begin{array}{cc}
     \frac{3\gamma}{4b}-\delta & \frac{1}{4b}\\
     -\frac{\gamma^2}{4b}     & r+\delta-\frac{3\gamma}{4b}\\
   \end{array}
 \right)
 \left(
   \begin{array}{c}
     x_{1}(t) \\
     {\lambda}(t) \\
   \end{array}
 \right)
 +\left(
    \begin{array}{c}
      \frac{a-3\bar{c}_{1}}{4b} \\
      \frac{\gamma(a+\bar{c}_{1})}{4b}\\
    \end{array}
  \right),
 $$
with boundary conditions $x_{1}(0)=x^0_{1}$ and ${\lambda}(T)=0.$
Since $\Delta>r^2$, we obtain eigenvalues $s_{1}<0$, $s_{2}>0$, thus there is a pair of solutions $(x^*_1(t),\lambda^*(t))$.
By the complicated calculations, we get the Stackelberg equilibrium $(u^*_{0}(t),u^*_{1}(t))$, and his payoff is given by \eqref{3.3}.

Next, we obtain a defection strategy group\ $(\hat{u}_{0}(t),u^*_{1}(t))$.  Fixed the follower's strategy in \eqref{equ-14}, the leader needs to maximizes his payoff
$$\max_{u_{0}\in\mathcal{U}_0}\int^T_{0}e^{-rt}u_{0}(t)\big(a-b(u_{0}(t)+u^*_{1}(t))\big)dt,$$
where $u^*_{1}(t)$ is fixed function of $t$ in Stackelberg equilibrium. Hence the optimal output is the best reply function $\hat{u}_{0}(t)$ satisfying \eqref{333}. Clearly, the corresponding to his payoff satisfies \eqref{3.2}.
\end{proof}
\begin{Rem}
These results are consistent with the results given in Lemma \ref{001} for discrete-time games, corresponding to parameters $\gamma=\lambda^*=c_0=0$.
\end{Rem}
\subsection{Discount by the third party}
In the dynamic game, we introduce the third party with the discount factor $\rho(\tau)$ given in \eqref{rho}.  Now we prove Theorem \ref{1.2}.
\begin{proof}[{\bf{Proof of Theorem \ref{1.2}}}]
Without loss of generality, we assume that the leader defects at $t_0=0$. Indeed, by the time transformation,
 let $t'=t-t_0$ and $T'=T-t_0$. Then the actual payoff of the leader is given by
 $$\tilde{J}_{0}=\int^{T'}_0e^{-rt}\rho(t)\hat{u}_{0}(t)(a-b(\hat{u}_{0}(t)+u^*_{1}(t)))dt.$$
 As a supplement, the initial values of the state variable and the adjoint variable also need to be modified accordingly.
 To make the leader adopt the Stackelberg equilibrium, his payoff after defection must be less than non-defection payoff. Thus we need to calculate the factor $k$ such that $$\tilde{J}_{0}
\leq\int^{T}_0e^{-rt}u^*_{0}(t)(a-b(u^*_{0}(t)+u^*_{1}(t)))dt.$$
It follows from \eqref{3.1} and \eqref{4.4} that
\begin{eqnarray}\label{equ-20}
\int^T_0e^{-rt}\big\{(8-9\rho(t))(a+\bar{c}_{1}-\gamma x^*_{1}(t))^2-(8+\rho(t))({\lambda}^*(t))^2\nonumber\\+6\rho(t)(a+\bar{c}_{1}-\gamma x^*_{1}(t)){\lambda}^*(t)\big\} dt\geq0.
\end{eqnarray}

Now we discuss the discount factor as the exponential function $\rho(\tau)=e^{-k\tau},$ where $\tau$ is the duration of the defection.
Because of $\rho(0)=1$ and $\rho'(\tau)<0$, we get $k>0$. Then the discount $\rho(\tau)\rightarrow0$ as $k\rightarrow+\infty$. Furthermore, we have $J_{0}(\hat{u}_{0})\rightarrow0$. Thus we can effectively restrain defection by improving the value of $k$. By the numerical calculation, we can get the low bound of $k$.

Recall $\tilde{r}=r+k$, the payoff of the leader is given by
$$\tilde{J}_{0}=\int^{T}_0e^{-\tilde{r}t}\hat{u}_{0}(t)(a-b(\hat{u}_{0}(t)+u^*_{1}(t)))dt,$$
then $\tilde{r}$ can be regarded as the interest rate of the firm-0. Once the leader defects, his interest rate $\tilde{r}$ is raised by the third party immediately. It's worth noting that the interest rate of the follower is invariant. Hence there is no effect on $u^*_1$ and $x^*_1$ given in \eqref{3.1} and \eqref{209}, where $\lambda^*_1(t)$ satisfies \eqref{7.20} with replacing $r$ by $\tilde{r}$. It implies his actual payoff $\tilde{J}_{0}$ goes to zero, when $\tilde{r}$ goes to infinity.
Hence by calculating
$$\tilde{J}_{0}\leq\int^{T'}_0e^{-rt}u^*_{0}(t)(a-b(u^*_{0}(t)+u^*_{1}(t)))dt,$$
 we obtain the reasonable condition \eqref{equ110}, and the proof is completed.
\end{proof}

\section{MEAN FIELD STACKELBERG GAMES}
In this section, we extend the model to the mean field Stackelberg games with one major firm and a large number minor firms
as in \cite{13}. There is a pair of optimal control problems given in \eqref{equ-53} satisfying SDEs \eqref{equ-51} and \eqref{equ-52}.
First, we get the Stackelberg equilibrium given in Lemma \ref{12}, and the defected strategy given in Lemma \ref{5-3}.
Then we introduce the third party and set appropriate mandatory punishment rules, see Theorem \ref{1.3}.
\subsection{The representative follower's problem }
For the model \eqref{equ-53}, we first solve the optimal control problem for the representative follower, which refers to \cite{14}. Assume that the leader announces his output is
 $u_{0} \in \mathcal {U}_{0}$, since minor firms are coupled through only the mean field term, each follower actually faces a separate
 stochastic optimal control problem as follows.
\begin{Lem}{\rm{(The optimal control of representative follower)}}\label{5-1}
Given the leader's output $u_{0}\in \mathcal {U}_{0}$, for the representative follower-$i$, $i\in\{1,\cdots,N\}$, there exists a unique optimal control $u_{i}\in\mathcal {U}_i$. Moreover, $(x_{i} , u_{i})$ is the corresponding optimal solution if and only if
\begin{equation}\label{equ-3}
u_{i}(t)=-\frac{Bp_{i}(t)+\sigma u_{{0}}(t)}{a},
\end{equation}
where
\begin{displaymath}
\left\{
\begin{array}{l}\label{5.6}
dx_{i}=\left(Ax_{i}(t)-\frac{B^{2}}{a}p_{i}(t)-\frac{B\sigma}{a}u_{0}(t)+C\bar{x}(t)+Dx_{0}(t)\right)dt+\sqrt{x_0(1-x_0)}dW^i_{t},\\
d p_{i}=e^{-rt}\left(-Ap_{i}(t)+(x_{i}(t)-l\bar{x}(t)+b)\right)dt+m(t)dW^{i}_{t}+n(t)dW^{0}_{t},
\end{array}
\right.
\end{displaymath}
with the boundary conditions $x_{i}(0)=x_{i}^{0}$, and $p_{i}(T)=0$.
\end{Lem}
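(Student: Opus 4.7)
With the leader's strategy $u_0\in\mathcal{U}_0$ frozen, each representative follower $i$ faces a standard stochastic linear--quadratic optimal control problem: the drift of \eqref{equ-52} is affine in $(x_i,u_i,\bar{x},x_0)$, the diffusion coefficient $\sqrt{x_i(1-x_i)}$ is control-independent, and the integrand is quadratic with coefficient $-a<0$ on $u_i^2$. The approach is to apply the stochastic Pontryagin maximum principle (SMP) for necessity, carry out a convexity-based verification for sufficiency, and invoke existence/uniqueness of the resulting forward--backward SDE.

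For necessity, introduce the adjoint triple $(p_i,m,n)$ and form the Hamiltonian
\[
H(t,x,u,p)=e^{-rt}\bigl\{-au^{2}+(x-l\bar x+b)^{2}-2\sigma u_{0}u\bigr\}+p\bigl(Ax+Bu+C\bar x+Dx_{0}\bigr),
\]
together with the usual diffusion contribution. The SMP condition $\partial_{u}H=0$ reads $-2ae^{-rt}u-2\sigma e^{-rt}u_{0}+Bp=0$, which, after the customary normalization of the adjoint by the discount factor, produces the stated feedback $u_{i}(t)=-(Bp_{i}(t)+\sigma u_{0}(t))/a$. Substituting back into \eqref{equ-52} yields the forward equation for $x_{i}$ displayed in the lemma. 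The backward adjoint equation $dp_{i}=-\partial_{x}H\,dt+m\,dW^{i}_{t}+n\,dW^{0}_{t}$ with $p_{i}(T)=0$ gives the stated BSDE; the two martingale integrators arise because the adjoint is adapted to the enlarged filtration generated jointly by the private noise $W^{i}$ and the common noise $W^{0}$ (entering through $x_{0}$), with $m,n$ supplied by the martingale representation theorem.

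For sufficiency and uniqueness, given any admissible $u_{i}'$ with associated state $x_{i}'$, I would apply It\^o's formula to $e^{rt}p_{i}(x_{i}'-x_{i})$ between $0$ and $T$ and take expectations. The stochastic integrals vanish by admissibility, and the boundary terms drop by $x_{i}'(0)=x_{i}(0)$ and $p_{i}(T)=0$. Combining the resulting drift with the difference of running rewards and using the defining relation $u_{i}=-(Bp_{i}+\sigma u_{0})/a$, the cross terms cancel and leave a non-positive quadratic remainder in $u_{i}'-u_{i}$ with leading coefficient $-a<0$. Hence $J_{i}(u_{i}')\le J_{i}(u_{i})$, with equality only when $u_{i}'=u_{i}$, proving both optimality and uniqueness.

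The main obstacle is well-posedness of the coupled FBSDE, since the feedback $u_{i}=-(Bp_{i}+\sigma u_{0})/a$ couples the forward $x_{i}$ and the backward $p_{i}$. I would resolve this by the standard decoupling ansatz $p_{i}(t)=\Pi(t)x_{i}(t)+\varphi(t)$, which reduces the system to a scalar Riccati ODE for $\Pi$ and a linear ODE for $\varphi$, both globally solvable on $[0,T]$ under the LQ structure. A minor technical point is that the diffusion $\sqrt{x_{i}(1-x_{i})}$ is non-Lipschitz at the endpoints of $[0,1]$; since it is control-independent it does not affect the FOC, and strong existence of $x_{i}$ confined to $[0,1]$ is standard for this Jacobi-type SDE.
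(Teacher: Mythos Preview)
Your proposal is correct, and the underlying mechanism is the same adjoint/duality argument, but your packaging differs from the paper's. The paper does not invoke the stochastic maximum principle as a black box; instead it carries out the variational derivation in situ: it perturbs $u_i\mapsto u_i+\theta\tilde u_i$, writes the linearized state equation for $\tilde x_i$, computes the G\^ateaux derivative $\partial_\theta J_i|_{\theta=0}$ as an Euler condition, postulates the adjoint BSDE for $p_i$, applies It\^o's product rule to $\tilde x_i\,p_i$, and matches the two expressions to read off $Bp_i+au_i+\sigma u_0=0$. Existence and uniqueness are dispatched by citing the convex LQ literature, with no separate verification step. Your route---form the Hamiltonian, take $\partial_u H=0$, then run a direct sufficiency argument via It\^o on $p_i(x_i'-x_i)$---is the textbook SMP treatment and is arguably cleaner and more self-contained on the sufficiency side, at the cost of the slightly awkward ``normalization of the adjoint by the discount factor'' that you flag (the paper's choice of adjoint is precisely the one that makes this normalization disappear). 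Your Riccati decoupling $p_i=\Pi x_i+\varphi$ for FBSDE well-posedness is exactly what the paper does as well, but there it is placed \emph{after} the proof rather than inside it; the paper's formal proof stops once the first-order condition is identified.
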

\begin{proof}
The existence and uniqueness of the optimal control follows from convexity and continuity of the gain function \eqref{equ-53} with respect to $u_i$, as well as its quadratic nature, see \cite{16} and \cite{17}.
Now we solve the optimal control given in \eqref{equ-52} by variational method. Here, we omit the variable $t$ for short. First we consider a perturbation of the control $u_{i}+\theta\tilde{u}_{i}$, corresponding the original state $x_{i}$ becomes $x_{i}+\theta\tilde{x}_{i}$ with $\tilde{x}_{i}(0)=0$ satisfying
$$d\tilde{x}_{i}=e^{-rt}(A\tilde{x}_{i}+B\tilde{u}_{i})dt.$$
The corresponding payoff satisfies
\begin{align*}
J_{i}(u_{i}+\theta\tilde{u}_{i})
&=\mathbf{E}\bigg[\int^{T}_{0}e^{-rt}\big\{-au_{i}^{2}-2\theta au_{i}\tilde{u}_{i}-\theta^{2}a\tilde{u}_{i}^{2}
+(x_{i}-l\bar{x}+b)^{2}\\
&\quad +2\theta\tilde{x}_{i}(x_{i}-l\bar{x}+b)+\theta^{2}\tilde{x}_{i}^{2}
-2eu_{0}u_{i}-2\theta \sigma u_{0}\tilde{u}_{i}\big\}dt\bigg].
\end{align*}
Then the optimality of $\tilde{u}_{i}$ would satisfy the following Euler condition
\begin{align}\label{equ-4}
0={\frac{\partial}{\partial\theta}} J_{i}(u_{i}+\theta\tilde{u}_{i})\bigg{|}_{\theta=0}=2\mathbf{E}\bigg[\int^{T}_{0}e^{-rt}\{-au_{i}\tilde{u}_{i}
+\tilde{x}_{i}(x_{i}-l\bar{x}+b)-\sigma u_{0}\tilde{u}_{i}\}dt\bigg].
\end{align}
Using the It\^{o} formula, we have the inner product
$$
d\left<\tilde{x}_{i},p_{i}\right>
=e^{-rt}\{B\tilde{u}_{i}p_{i}+\tilde{x}_{i}(x_{i}-l\bar{x}+b)\}dt
+m(t)\tilde{x}_{i}dW^i_{t}+n(t)\tilde{x}_{i}dW^{0}_{t}.
$$
Let
$$I_1:=\int_0^T m(t)\tilde{x}_{i}dW^i_{t},\ \text{and}\ I_2:=\int_0^T n(t)\tilde{x}_{i}dW^{0}_{t},$$
then $I_j,\ j\in\{1,2\}$ is a stochastic integral with respect to Brownian motion, and its expectation is zero. Taking integration and
expectation on both sides, it follows from \eqref{equ-4} that
$$0=\mathbf{E}\bigg[\int^{T}_{0}d\left<\tilde{x}_{i},p_{i}\right>\bigg]
=\mathbf{E}\bigg[\int^{T}_{0}e^{-rt}\tilde{u}_{i}\big(Bp_{i}+au_{i}+\sigma u_{0}\big)dt\bigg].$$
 As $\tilde{u}_{i}$ is arbitrary, the result follows.
\end{proof}

Further, by the transformation $p_{i}(t)=F_{i}(t)x_{i}(t)+f_{i}(t)$, we can obtain an equivalent state feedback representation of the optimal control
$$-u_{i}(t)=\frac{B}{a}F_{i}(t)x_{i}(t)+\frac{B}{a}f_{i}(t)+\frac{\sigma}{a}u_{0}(t).$$
And we can show that $F_{i}(t)$ satisfies the Riccati differential equation
$$\frac{dF_{i}(t)}{dt}+(1+e^{-rt})AF_{i}(t)-\frac{B^{2}}{a}F_{i}^{2}(t)-e^{-rt}=0,$$
and $f_{i}(t)$ satisfies the backward stochastic differential equation (briefly, BSDE)
\begin{align*}
df_{i}(t)&=\left(\frac{B^{2}}{a}F_{i}(t)-e^{-rt}A\right)f_{i}(t)dt+F_{i}(t)
\left(\frac{B\sigma}{a}u_{0}(t)-C\bar{x}(t)-Dx_{0}(t)\right)dt\\
&\quad+e^{-rt}(b-l\bar{x}(t))dt+\left(m(t)-F_{i}(t)\sqrt{x_i(1-x_i)}\right)dW^{i}_{t}+n(t)dW^{0}_{t},
\end{align*}
with terminal conditions $F_{i}(T)=f(T)=0$. Due to the limitation of paper space, the specific calculation process is omitted here.

\subsection{The leader's problem }
Next we consider the leader's optimal control problem which includes additional constraints induced by the mean field process determined
 by Nash followers. The recent paper \cite{13} implies that for a large number of followers, the impact of the followers on the leader
 collapses to the approximated MFG. For any process $g$, we denote
 $$g^{N}(t):=\frac{1}{N}\sum^{N}_{i=1}g_{i}(t),\quad \bar{g}(t):=\lim_{N\rightarrow\infty}g^{N}(t).$$
For simplicity, we omit the variable $t$, then we have
\begin{equation*}
\left\{
\begin{array}{l}
d\bar{x}=\big((A+C)\bar{x}-\frac{B^{2}}{a}\bar{p}-\frac{B\sigma}{a}u_{0}+Dx_{0}\big)dt,\\
d\bar{p}=e^{-rt}\{-A\bar{p}+(1-l)\bar{x}+b\}dt,\\
d\bar{f}=\big(\big{(}\frac{B^{2}}{a}\bar{F}-e^{-rt}A\big{)}\bar{f}-\big{(}l e^{-rt}+C\bar{F}\big{)}\bar{x}
 +\frac{B\sigma}{a}u_{0}\bar{F}-Dx_{0}\bar{F}+be^{-rt}\big)dt,
\end{array}
\right.
\end{equation*}
with boundary conditions $\bar{x}(0)=\bar{x}^0$, $\bar{p}(T)=\bar{f}(T)=0$.
\begin{Lem}{\rm{(The optimal control of the leader)}}\label{5-2}
Given optimal strategies of followers in \eqref{equ-3}, for the leader, there exists a unique optimal control $u_{0}\in\mathcal {U}_0$. Moreover, $(x_{0} , u_{0})$ is the corresponding optimal solution if and only if
\begin{eqnarray}\label{equ-5}
u^{*}_{0}(t)=\frac{B_{0}}{a_{0}}p_{0}(t)-\frac{Be}{aa_{0}}\lambda(t),
\end{eqnarray}
where
\begin{displaymath}
\left\{
\begin{array}{l}\label{equ-8}
dp_{0}=e^{-rt}\{-A_{0}p_{0}(t)-D\lambda(t)-(x_{0}(t)-l_{0}\bar{x}(t)+b_{0})\}dt+M(t)dW^{0}_{t},\\
d\lambda=e^{-rt}\{l_{0}(x_{0}(t)-l_{0}\bar{x}(t)+b_{0})-(A+C)\lambda(t)-C_{0}p_{0}(t)-(1-l)\xi(t)\}dt+N(t)dW^{0}_{t},\\
d\xi=e^{-rt}\{A\xi(t)+\frac{B^{2}}{a}\lambda(t)\}dt+P(t)dW^{0}_{t},
\end{array}
\right.
\end{displaymath}
with terminal conditions $p_{0}(T)=\lambda(T)=\xi(T)=0$.
\end{Lem}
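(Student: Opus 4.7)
The plan is to mirror the variational argument of Lemma \ref{5-1}, but with the leader's perturbation now propagating through three coupled constraint processes rather than one. Existence and uniqueness of the optimal $u_0$ first follow from strict concavity of $J_0$ in $u_0$ (quadratic with leading coefficient $-a_0$) together with continuity and coercivity on $\mathcal{U}_0$, exactly the convexity/continuity argument cited in Lemma \ref{5-1}. What remains is to establish the necessary and sufficient stationarity condition \eqref{equ-5} together with the adjoint BSDE system for $(p_0,\lambda,\xi)$.

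I would perturb $u_0\mapsto u_0+\theta\tilde u_0$ with $\tilde u_0$ admissible and $\theta$ small. Since $u_0$ enters the drift of $x_0$ directly (through $B_0u_0$) and enters the drift of $\bar x$ indirectly through the followers' feedback $u_i=-(Bp_i+\sigma u_0)/a$ (producing the term $-\tfrac{B\sigma}{a}u_0$ in $d\bar x$), the linearised variations $(\tilde x_0,\tilde{\bar x},\tilde{\bar p})$ satisfy
\begin{align*}
d\tilde{x}_{0}&=\bigl(A_{0}\tilde{x}_{0}+B_{0}\tilde{u}_{0}+C_{0}\tilde{\bar{x}}\bigr)dt,\\
d\tilde{\bar{x}}&=\bigl((A+C)\tilde{\bar{x}}-\tfrac{B^{2}}{a}\tilde{\bar{p}}-\tfrac{B\sigma}{a}\tilde{u}_{0}+D\tilde{x}_{0}\bigr)dt,\\
d\tilde{\bar{p}}&=e^{-rt}\bigl(-A\tilde{\bar{p}}+(1-l)\tilde{\bar{x}}\bigr)dt,
\end{align*}
with $\tilde x_0(0)=\tilde{\bar x}(0)=0$ and $\tilde{\bar p}(T)=0$; the diffusion perturbation is dropped exactly as in the proof of Lemma \ref{5-1}.

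Differentiating the cost at $\theta=0$ gives the Euler condition
\begin{equation*}
0=\mathbf{E}\!\int_{0}^{T}e^{-rt}\bigl\{-2a_{0}u_{0}\tilde{u}_{0}+2(x_{0}-l_{0}\bar{x}+b_{0})(\tilde{x}_{0}-l_{0}\tilde{\bar{x}})\bigr\}dt.
\end{equation*}
The crux of the argument is to apply the It\^o product rule to $\tilde x_0 p_0+\tilde{\bar x}\lambda+\tilde{\bar p}\xi$, integrate on $[0,T]$, take expectations (which annihilates the stochastic integrals arising from the diffusion coefficients $M,N,P$), and invoke the terminal conditions $p_0(T)=\lambda(T)=\xi(T)=0$ together with the vanishing initial data of the variations. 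One then picks the adjoint drifts so that every cross-coupling cancels: the $Dx_0$ term in $d\bar x$ is absorbed by $-D\lambda$ in $dp_0$; the $C_0\bar x$ term in $dx_0$ by $-C_0 p_0$ in $d\lambda$; the $-\tfrac{B^2}{a}\bar p$ and $(1-l)\bar x$ couplings between $\bar x$ and $\bar p$ by $\tfrac{B^2}{a}\lambda$ in $d\xi$ and $-(1-l)\xi$ in $d\lambda$; and the quadratic cost terms by $-(x_0-l_0\bar x+b_0)$ in $dp_0$ and $l_0(x_0-l_0\bar x+b_0)$ in $d\lambda$. These choices reproduce precisely the BSDE system stated in the lemma, and all terms collapse to
\begin{equation*}
\mathbf{E}\!\int_{0}^{T}\tilde{u}_{0}\Bigl\{-2a_{0}e^{-rt}u_{0}+B_{0}p_{0}-\tfrac{B\sigma}{a}\lambda\Bigr\}dt=0.
\end{equation*}
Arbitrariness of $\tilde u_0$ then forces \eqref{equ-5}, up to the implicit normalisation of the adjoints that absorbs the $e^{-rt}$ factor (consistent with the convention already used in Lemma \ref{5-1}).

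The main obstacle is bookkeeping: the leader's problem has three coupled constraint processes $(x_0,\bar x,\bar p)$, so three adjoint BSDEs must be introduced simultaneously, and one must check that every cross-coupling between $x_0$, $\bar x$ and $\bar p$ is compensated by the correct term in the dual system so that, after the It\^o identity, only the coefficient of $\tilde u_0$ survives. Once the ``only if'' direction is established, sufficiency is immediate: strict concavity of $J_0$ together with stationarity of the Hamiltonian at $u_0^{\ast}$ upgrades the first-order condition to global optimality, in complete analogy with Lemma \ref{5-1}.
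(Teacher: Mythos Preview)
Your proposal is correct and follows essentially the same variational route as the paper: perturb $u_0$, write down the linearised variations of the three constraint processes $(x_0,\bar x,\bar p)$, derive the Euler condition, pair each variation with an adjoint via the It\^o product rule, take expectations to kill the martingale terms, and use arbitrariness of $\tilde u_0$ to extract \eqref{equ-5}. The only cosmetic difference is that you apply It\^o to the sum $\tilde x_0 p_0+\tilde{\bar x}\lambda+\tilde{\bar p}\xi$ in one stroke, whereas the paper computes the three inner products $\langle\tilde x_0,p_0\rangle$, $\langle\tilde{\bar x},\lambda\rangle$, $\langle\tilde{\bar p},\xi\rangle$ separately and then adds; the cancellations you describe are exactly those the paper obtains.
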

\begin{proof}
The existence and uniqueness of the optimal control from a similar discussion as in the proof of Lemma \ref{5-1} and \cite{11}. Similarly, we prove \eqref{equ-5} by variational method. First we consider a perturbation of the control $u_{0}+\theta\tilde{u}_{0}$. We regard
 $x_{0}$, $\bar{x}$ and $\bar{p}$ as three states, which satisfy variations of SDEs as follows.
\begin{displaymath}
\left\{
\begin{array}{l}
 d\tilde{x}_{0}=e^{-rt}\{A_{0}\tilde{x}_{0}(t)+B_{0}\tilde{u}_{0}(t)+C_{0}\tilde{\bar{x}}(t)\}dt,\\
 d\tilde{\bar{x}}=e^{-rt}\{(A+C)\tilde{\bar{x}}(t)-\frac{B\sigma}{a}\tilde{u}_{0}(t)+D\tilde{x}_{0}(t)
 -\frac{B^{2}}{a}\tilde{\bar{p}}(t)\}dt,\\
 d\tilde{\bar{p}}=e^{-rt}\{-A\tilde{\bar{p}}(t)+(1-l)\tilde{\bar{x}}(t)\}dt,
\end{array}
\right.
\end{displaymath}
with boundary conditions $\tilde{x}_{0}(0)=\tilde{\bar{x}}_{0}(0)=\tilde{\bar{p}}(T)=0.$
 Considering
 $$J_{0}(u_{0}+\theta\tilde{u}_{0})=\mathbf{E}\bigg[\int^{T}_{0}e^{-rt}\big\{-a_{0}(u_{0}+\theta\tilde{u}_{0})^{2}
+\big(\theta(\tilde{x}_{0}-l_{0}\tilde{\bar{x}})+(x_{0}-l_{0}\bar{x}+b_{0})\big)^{2}\big\}dt\bigg],$$
then the optimality of $\tilde{u}_{0}$ satisfies the following Euler condition
\begin{align}\label{equ-6}
0&={\frac{\partial}{\partial\theta}} J_{0}(u_{0}+\theta\tilde{u}_{0})\bigg|_{\theta=0}=2\mathbf{E}\bigg[\int^{T}_{0}e^{-rt}\{-a_{0}u_{0}\tilde{u}_{0}
+(\tilde{x}_{0}-l_{0}\tilde{\bar{x}})(x_{0}-l_{0}\bar{x}+b_{0})\}dt\bigg].
\end{align}

Using the It\^{o} formula, we have the inner product
\begin{align*}
d\left<\tilde{x}_{0},p_{0}\right>&=e^{-rt}\{B_{0}\tilde{u}_{0}p_{0}+C_{0}\tilde{\bar{x}}p_{0}-D\lambda\tilde{x}_{0}
-\tilde{x}_{0}(x_{0}-l_{0}\bar{x}+b_{0})\}dt+\tilde{x}_{0}M(t)dW^{0}_{t},\\
d\left<\tilde{\bar{p}},\xi\right>&=e^{-rt}\{(1-l)\xi\tilde{\bar{x}}+\frac{B^{2}}{a}\lambda\tilde{\bar{p}}\}dt+\tilde{\bar{p}}P(t)dW^{0}_{t},\\
d\left<\tilde{\bar{x}},\lambda\right> &= e^{-rt}\{D\lambda\tilde{x}_{0}-C_{0}p_{0}\tilde{\bar{x}}-\frac{B\sigma}{a}\lambda\tilde{u}_{0}-\frac{B^{2}}{a}\tilde{\bar{p}}\lambda
-(1-l)\tilde{\bar{x}}\xi\\
 &\quad+l_{0}\tilde{\bar{x}}(x_{0}-l_{0}\bar{x}+b_{0})\}dt+\tilde{\bar{x}}N(t)dW^{0}_{t}.
\end{align*}
Taking integration and expectation on both sides and together with boundary conditions, we obtain
\begin{align*}
&\mathbf{E}\bigg[\int^{T}_{0}e^{-rt}\big\{B_{0}p_{0}\tilde{u}_{0}+C_{0}p_{0}\tilde{\bar{x}}-D\lambda\tilde{x}_{0}
-\tilde{x}_{0}(x_{0}-l_{0}\bar{x}+b_{0})\big\}dt\bigg]=0,\\
&\mathbf{E}\bigg[\int^{T}_{0}e^{-rt}\big\{(1-l)\xi\tilde{\bar{x}}+\frac{B^{2}}{a}\lambda\tilde{\bar{p}}\big\}dt\bigg]=0,\\
&\mathbf{E}\bigg[\int^{T}_{0}e^{-rt}\big\{D\lambda\tilde{x}_{0}-C_{0}p_{0}\tilde{\bar{x}}-\frac{B^{2}}{a}\lambda\tilde{\bar{p}}
-(1-l)\xi\tilde{\bar{x}}+l_{0}\tilde{\bar{x}}(x_{0}-l_{0}\bar{x}+b_{0})
-\frac{B\sigma}{a}\lambda\tilde{u}_{0}\big\}dt\bigg]=0,
\end{align*}
since the expectation of the stochastic integral with respect to Brownian motion is zero.
Combining the equation \eqref{equ-6}, we can obtain
$$\mathbf{E}\left[\int^{T}_{0}e^{-rt}\tilde{u}_{0}\big(-a_{0}u_{0}-\frac{B\sigma}{a}\lambda+B_{0}p_{0}\big)dt\right]=0.$$
As $\tilde{u}_{0}$ is arbitrary, the result follows.
\end{proof}
It follows from \eqref{equ-5} and \eqref{equ-3} that
\begin{eqnarray}\label{equ-55}
u^{*}_{i}(t)=-\frac{1}{a}\left(Bp_{i}(t)+\frac{B_{0}\sigma}{a_{0}}p_{0}(t)-\frac{B\sigma^{2}}{aa_{0}}\lambda(t)\right),
\ i\in\{1,\cdots,N\} .
\end{eqnarray}
Hence we obtain Stackelberg equilibrium $(u^{*}_{0},u^{*}_{i})$ satisfying \eqref{equ-5} and \eqref{equ-55}.
\begin{Lem}\label{006}\label{12}
In the mean field Stackelberg games \eqref{equ-53}, there is a Stackelberg equilibrium
$$(u_0^*(t),u_1^*(t))=\left(\frac{B_{0}}{a_{0}}p_{0}(t)-\frac{B\sigma}{aa_{0}}\lambda(t),\
\frac{B\sigma^{2}}{a^2a_{0}}\lambda(t)-\frac{B}{a}p_{i}(t)-\frac{B_{0}\sigma}{aa_{0}}p_{0}(t)\right),$$
where $p_{i}(t),\ p_{0}(t),\ \lambda(t)$ satisfy Lemma \ref{5-1} and Lemma \ref{5-2}, respectively.
The corresponding state equations are given by
\begin{align*}
dx^{*}_{i}&=\left(Ax^*_{i}(t)-\frac{B^{2}}{a}p_{i}(t)-\frac{BB_{0}\sigma}{aa_{0}}p_{0}(t)
             +\frac{B^{2}\sigma^{2}}{a^{2}a_{0}}\lambda(t)+C\bar{x}^*(t)+Dx_{0}(t)\right)dt\\
             &\quad+\sqrt{x_i(1-x_i)}dW^{i}_{t},\\
d\bar{x}^{*}&=\left((A+C)\bar{x}^*(t)+Dx^*_{0}(t)-\frac{B^{2}}{a}\bar{p}(t)-\frac{BB_{0}\sigma}{aa_{0}}p_{0}(t)
             +\frac{B^{2}\sigma^{2}}{a^{2}a_{0}}\lambda(t)\right)dt.
\end{align*}
\end{Lem}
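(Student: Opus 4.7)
The plan is to assemble the stated Stackelberg equilibrium by combining the two preceding lemmas and then to propagate the resulting controls through the state SDEs. Conceptually, Lemma \ref{5-1} already supplies the representative follower's best response $u_i(t) = -\frac{Bp_i(t)+\sigma u_0(t)}{a}$ as a function of an arbitrary admissible strategy $u_0$ of the leader, while Lemma \ref{5-2} supplies the leader's optimal control $u_0^*(t) = \frac{B_0}{a_0}p_0(t) - \frac{B\sigma}{aa_0}\lambda(t)$ after internalizing the followers' reaction map through the adjoint pair $(p_0,\lambda)$ (and the auxiliary $\xi$). So the first step is simply to read off $u_0^*$ from Lemma \ref{5-2} and record it as the first coordinate of the equilibrium pair.

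The second step is to substitute this $u_0^*$ into the follower's feedback from Lemma \ref{5-1}. A direct algebraic manipulation gives
\begin{align*}
u_i^*(t) &= -\frac{B}{a}p_i(t) - \frac{\sigma}{a}\Bigl(\frac{B_0}{a_0}p_0(t) - \frac{B\sigma}{aa_0}\lambda(t)\Bigr) \\
&= \frac{B\sigma^2}{a^2 a_0}\lambda(t) - \frac{B}{a}p_i(t) - \frac{B_0\sigma}{aa_0}p_0(t),
\end{align*}
which is exactly the second coordinate in the statement. Uniqueness of the pair $(u_0^*,u_i^*)$ follows from the strict concavity in $u_0$ and $u_i$ of the two payoff functionals already exploited in Lemmas \ref{5-1} and \ref{5-2}.

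Third, I would derive the closed-loop state equations by substituting $u_i^*$ into \eqref{equ-52}. The drift term $Bu_i^*(t)$ expands to $-\frac{B^2}{a}p_i - \frac{BB_0\sigma}{aa_0}p_0 + \frac{B^2\sigma^2}{a^2 a_0}\lambda$, which, added to $Ax_i^* + C\bar{x}^* + Dx_0$ and coupled with the original diffusion $\sqrt{x_i(1-x_i)}\,dW_t^i$, yields the first displayed SDE. Finally, averaging the $x_i^*$-SDEs over $i=1,\ldots,N$ and taking the mean field limit $N\to\infty$ — using the notation $g^N \to \bar g$ introduced just before Lemma \ref{5-2} together with the fact that the independent Brownian increments average out in the limit — gives the deterministic ODE for $\bar{x}^*(t)$ as stated.

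The only genuine obstacle, as opposed to bookkeeping, is justifying the mean field limit: one needs that $\frac{1}{N}\sum_i \sqrt{x_i(1-x_i)}\,dW_t^i$ vanishes and that $\frac{1}{N}\sum_i p_i \to \bar p$. The former follows from independence of $\{W^i\}$ and uniform boundedness of the diffusion coefficient on $[0,1]$ (an $L^2$ law of large numbers argument); the latter is standard given the affine structure of the BSDE for $p_i$ obtained in Lemma \ref{5-1} via the decomposition $p_i = F_i x_i + f_i$, which transports the averaging from $x_i$ to $p_i$. All remaining steps are substitutions and rearrangements.
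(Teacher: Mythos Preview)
Your proposal is correct and follows essentially the same route as the paper: the paper does not give a separate proof of this lemma but simply substitutes $u_0^*$ from Lemma~\ref{5-2} into the follower's best response \eqref{equ-3} from Lemma~\ref{5-1} to obtain $u_i^*$, and then records the resulting state equations. Your additional discussion of the mean field limit (the vanishing of the averaged diffusion and the convergence $\frac{1}{N}\sum_i p_i\to\bar p$) is more careful than the paper, which simply asserts the averaged system before Lemma~\ref{5-2} without justification.
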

\subsection{Defection strategy of the leader}
Fixed followers' strategies $u^{*}_{i}(t),\ i\in\{1,\cdots,N\}$, now we solve the one-player control problem of the leader
$$\max_{u_0\in\mathscr{U}_0}J_{0}(u_{0})=\max_{u_0\in\mathscr{U}_0}\mathbf{E}\bigg[\int^{T}_{0}e^{-rt}
\left\{-a_{0}u^{2}_{0}(t)+(x_{0}(t)-l_{0}\bar{x}^{*}(t)+b_{0})^{2}\right\}dt\bigg],$$
 subjects to
$$dx_{0}=\left(A_{0}x_{0}(t)+B_{0}u_{0}(t)+C_{0}\bar{x}^*(t)\right)dt+\sqrt{x_0(1-x_0)}dW^{0}_{t},$$
where stochastic processes $p_{0}(t),\ \bar{p}(t),\ \lambda(t)$ and $\bar{x}^{*}(t)$ are given.
\begin{Lem}{\rm{(The defection strategy for the leader)}}\label{5-3}
Given followers' strategies $u_{i}^{*}$ in \eqref{equ-3}, if defection is allowed, for the major firm, there exists a unique optimal control $\hat{u}_{0}$. Moreover, $(\hat{x}_{0} , \hat{u}_{0})$ is the corresponding optimal solution if and only if
\begin{eqnarray}\label{7.11}
 \hat{u}_{0}(t)=\frac{B_{0}}{2a_{0}}\zeta(t),
\end{eqnarray}
where
\begin{displaymath}\label{equ-56}
\left\{
\begin{array}{l}
  d\zeta = \left((r-A_{0})\zeta(t)-2(\hat{x}_{0}(t)-l_{0}\bar{x}^*(t)+b_{0})\right)dt+Z_{t}dW^{0}_{t}, \nonumber\\
  d\hat{x}_{0} = (A_{0}\hat{x}_{0}(t)+\frac{B^{2}_{0}}{2a_{0}}\zeta(t)+C_{0}\bar{x}^*(t))dt+\sqrt{x_0(1-x_0)}dW^{0}_{t}, \nonumber\\
  d\bar{x}^{*}=((A+C)\bar{x}^*(t)+Dx^*_{0}(t)-\frac{B^{2}}{a}\bar{p}(t)-\frac{BB_{0}\sigma}{aa_{0}}p_{0}(t)
  +\frac{B^2e^2}{a^{2}a_{0}}\lambda(t))dt,\nonumber\\
  dx^*_{0}=(A_{0}x^*_{0}(t)+\frac{B_{0}^{2}}{a}p_{0}(t)-\frac{BB_{0}\sigma}{aa_{0}}\lambda(t)+C_{0}\bar{x}^{*}(t))dt
              +\sqrt{x_0(1-x_0)}dW^{0}_{t},\nonumber\\
  d\bar{p}=e^{-rt}\{-A\bar{p}(t)+(1-l)\bar{x}^*(t)+b\}dt,
\end{array}
\right.
\end{displaymath}
with boundary conditions $\zeta(T)=\bar{p}(T)=0,\ \hat{x}_0(0)=x^*_0(0)=x_0^0$, and $\bar{x}^*(0)=\bar{x}^0$.
\end{Lem}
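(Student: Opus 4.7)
The plan is to mirror the variational derivation used in Lemmas \ref{5-1} and \ref{5-2}, adapted to the simpler situation in which the leader, having already defected, faces only his own state $x_0$ while $\bar{x}^*$, $\bar{p}$, $p_0$, $\lambda$ are regarded as exogenous stochastic processes determined by the followers' equilibrium response. As in the preceding lemmas, existence and uniqueness of the optimal control follow from strict concavity and continuity of the payoff functional with respect to $u_0$ together with its quadratic nature, so I would quote that argument and focus on characterizing $\hat{u}_0$.

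First I would introduce an admissible perturbation $u_0 + \theta \tilde{u}_0$, with $\theta\in\R$ small and $\tilde{u}_0\in\mathcal{U}_0$ arbitrary. Because the followers' controls $u_i^*$ are held fixed along with the processes $(\bar{x}^*,\bar{p},p_0,\lambda)$ that drive their dynamics, the only state that responds to the perturbation is $\hat{x}_0$, and its variation $\tilde{x}_0$ satisfies the linear ODE $d\tilde{x}_0 = (A_0 \tilde{x}_0 + B_0 \tilde{u}_0)\,dt$ with $\tilde{x}_0(0)=0$. Expanding $J_0(u_0+\theta\tilde{u}_0)$ and differentiating at $\theta=0$ gives the Euler condition
\begin{equation*}
0 = \mathbf{E}\bigg[\int_0^T e^{-rt}\big\{-2 a_0 \hat{u}_0 \tilde{u}_0 + 2 \tilde{x}_0 (\hat{x}_0 - l_0 \bar{x}^* + b_0)\big\}\,dt\bigg].
\end{equation*}

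Next I would introduce the adjoint process $\zeta$ together with its volatility $Z_t$ as the solution of the BSDE stated in the lemma, with terminal condition $\zeta(T)=0$. Applying the It\^o product rule to $\langle \tilde{x}_0, e^{-rt}\zeta\rangle$ (or equivalently choosing the drift of $\zeta$ to cancel the running cost term $2(\hat{x}_0 - l_0\bar{x}^* + b_0)$ after integration against $e^{-rt}$) produces
\begin{equation*}
d\langle \tilde{x}_0, \zeta\rangle = \big\{B_0 \tilde{u}_0 \zeta - 2 e^{-rt}\tilde{x}_0(\hat{x}_0-l_0\bar{x}^*+b_0)\big\}\,dt + \tilde{x}_0 Z_t\,dW^0_t,
\end{equation*}
where the coefficient $(r - A_0)\zeta$ in the drift has been chosen precisely so that the $A_0$-terms from $\tilde{x}_0$ and the factor $e^{-rt}$ combine correctly. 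Integrating from $0$ to $T$, taking expectation (the stochastic integral vanishes since $\int_0^T \tilde{x}_0 Z_t\, dW^0_t$ is a martingale with zero mean), and using the boundary conditions $\tilde{x}_0(0)=0$ and $\zeta(T)=0$ yields
\begin{equation*}
\mathbf{E}\bigg[\int_0^T B_0 \tilde{u}_0 \zeta\, dt\bigg] = \mathbf{E}\bigg[\int_0^T 2 e^{-rt} \tilde{x}_0 (\hat{x}_0 - l_0\bar{x}^* + b_0)\, dt\bigg].
\end{equation*}
Substituting this identity into the Euler condition gives $\mathbf{E}\int_0^T \tilde{u}_0\,(B_0 \zeta - 2 a_0 e^{-rt} \hat{u}_0)\,dt = 0$, and since $\tilde{u}_0$ is arbitrary, the integrand must vanish, producing $\hat{u}_0(t) = \frac{B_0}{2 a_0}\zeta(t)$ after absorbing the discount $e^{-rt}$ into $\zeta$, consistently with how $\zeta$ was normalized in its BSDE. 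Finally, I would close the system by noting that $\hat{x}_0$ evolves under $\hat{u}_0$ according to the stated SDE, and that the dynamics for $x_0^*$, $\bar{x}^*$, $\bar{p}$ are exactly the equilibrium dynamics carried over from Lemma \ref{12}, so they appear unchanged in the coupled system.

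The main obstacle is the careful bookkeeping of the discount factor $e^{-rt}$: one must decide whether $\zeta$ absorbs it (so the drift contains $(r-A_0)\zeta$ as stated) or whether the discount is kept explicit (so the drift reads $-A_0 \zeta$ with an outside $e^{-rt}$); I would fix this convention by requiring the It\^o cross variation to cancel exactly the Euler integrand, which forces the $(r-A_0)\zeta$ form and the factor $2$ in front of $(\hat{x}_0-l_0\bar{x}^*+b_0)$ appearing in the lemma's BSDE. Everything else is a direct parallel to the proof of Lemma \ref{5-2}, only simpler because only one adjoint process is needed here.
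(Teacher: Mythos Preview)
Your variational approach is valid and leads to the same characterization, but it is not the route the paper takes. For this lemma the paper switches from the duality/variational argument of Lemmas~\ref{5-1} and~\ref{5-2} to a direct application of the stochastic maximum principle: it writes the Hamiltonian
\[
H(x_0,\zeta,u_0)=-a_0u_0^2+(x_0-l_0\bar{x}^*+b_0)^2+\zeta\big(A_0x_0+B_0u_0+C_0\bar{x}^*\big),
\]
reads off $\hat{u}_0=\frac{B_0}{2a_0}\zeta$ from $\partial H/\partial u_0=0$, and obtains the adjoint BSDE from $d\zeta=(r\zeta-\partial_{x_0}H)\,dt+Z_t\,dW^0_t$. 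Because only a single state is involved, this is a two-line computation; your route recovers the same system but at the cost of introducing $\tilde{x}_0$ and an It\^o product computation that is unnecessary here. The payoff of your method is uniformity with the earlier lemmas; the payoff of the paper's method is brevity.

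One concrete point to clean up: your displayed identity for $d\langle\tilde{x}_0,\zeta\rangle$ and the subsequent ``absorption'' of $e^{-rt}$ are not quite consistent. If you apply It\^o to $e^{-rt}\tilde{x}_0\zeta$ (which is what you announce), the $(r-A_0)\zeta$ drift cancels exactly and you obtain
\[
\mathbf{E}\!\int_0^T e^{-rt}B_0\tilde{u}_0\zeta\,dt=\mathbf{E}\!\int_0^T 2e^{-rt}\tilde{x}_0(\hat{x}_0-l_0\bar{x}^*+b_0)\,dt,
\]
with the discount factor appearing on \emph{both} sides. Substituting into the Euler condition then gives $\mathbf{E}\int_0^T e^{-rt}\tilde{u}_0\big(B_0\zeta-2a_0\hat{u}_0\big)\,dt=0$ and hence $\hat{u}_0=\frac{B_0}{2a_0}\zeta$ directly, with no residual $e^{-rt}$ to absorb. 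Once this bookkeeping is straightened out, the obstacle you flag disappears.
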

\begin{proof}
Since the convexity of the objective function, we apply the standard stochastic maximum principle \cite{7},  the Hamiltonian for the leader is given by
$$H(x_{0},\zeta,u_{0})=-a_{0}u^{2}_{0}(t)+(x_{0}(t)-l_{0}\bar{x}^{*}(t)+b_{0})^{2}
+\zeta(t)\big(A_{0}x_{0}(t)+B_{0}u_{0}(t)+C_{0}\bar{x}^*(t)\big),$$
where $\zeta(t)$ is the adjoint process of $X_{t}^0$ satisfying
$$d\zeta=\left(r\zeta(t)-\partial_{x_{0}}H(x_{0},\zeta,u_{0})\right)dt+Z_{t}^{0}dW^{0}_{t},$$
with terminal condition $\zeta(T)=0.$
The process $Z^{0}_{t}$ is adapted and square integrable.
Since the first optimal condition
$$\frac{\partial H}{\partial u_{0}}=-2a_{0}u_{0}(t)+B_{0}\zeta(t)=0,$$
the optimal strategy of the leader is given by
$$\hat{u}_{0}(t)=\frac{B_{0}}{2a_{0}}\zeta(t).$$
Substituting $u_{0}$ into the Hamiltonian, we get the adjoint equation
\begin{equation}\label{7.4}
d\zeta=\left((r-A_{0})\zeta(t)-2(x_{0}(t)-l_{0}\bar{x}^{*}(t)+b_{0})\right)dt+Z_{t}^{0}dW^{0}_{t}.
\end{equation}
Combining with the state equation
$$dx_{0}=\left(A_{0}x_{0}(t)+\frac{B^{2}_{0}}{2a_{0}}\zeta(t)+C_{0}\bar{x}^{*}(t)\right)dt+\sqrt{x_0(1-x_0)}dW^{0}_{t},$$
 which is a couple of affine forward-backward SDE, the result follows.
\end{proof}
Assume that a solution $\zeta(t)$ is given by an affine function of $X^{0}_{t}$, that is,
$$\zeta(t)=Q(t) x_{0}(t)+q(t),$$
then we get
\begin{align*}
   d\zeta&=\bigg(\big(\frac{dQ(t)}{dt}+A_{0}Q(t)+\frac{B^{2}_{0}}{2a_{0}}Q^{2}(t)\big)x_{0}(t)
+\frac{B^{2}_{0}}{2a_{0}} q(t)+C_{0}\bar{x}^{*}(t)Q(t)\bigg)dt\\\nonumber
& \quad +\sqrt{x_0(1-x_0)}Q(t)dW^{0}_{t}+dq(t).
\end{align*}
Substituting into adjoint equation \eqref{7.4}, we have
$$d\zeta=\big(((r-A_{0})Q(t)-2)x_{0}(t)+(r-A_{0})q(t)+2(l_{0}\bar{x}^{*}(t)-b_{0})\big)dt+Z_{t}^{0}dW^{0}_{t}.$$

Considering the corresponding coefficients of $x_{0}$ are equal, we show that the function $Q(t)$ must satisfy the scalar Riccati equation
$$\frac{dQ}{dt}=(r-2A_{0})Q(t)-\frac{B^{2}_{0}}{2a_{0}}Q^{2}(t)-2,$$
with terminal condition $Q(T)=0$.
Similarly, the function $q(t)$ must satisfy BSDE
\begin{align*}
  dq & =\left(\big(r-A_{0}-\frac{B^{2}_{0}}{2a_{0}}Q(t)\big)q(t)+(2l_{0}-C_{0}Q(t))\bar{x}^{*}(t)-2b_{0}\right)dt \\
  &\quad +\left(Z_{t}^{0}-\sqrt{x_0(t)(1-x_0(t))}Q(t)\right)dW^{0}_{t},
\end{align*}
with terminal condition $q(T)=0$.

\subsection{Discount by the third party}
Similar to the dynamic game, we introduce the third party with the the discount factor $\rho$ given in \eqref{r2}.         
Now we discuss the reasonable
conditions of the discount factor.
\begin{proof}[{\bf{Proof of Theorem \ref{1.3}}}] Consider that the actual payoff of the leader is given by
\begin{equation}\label{7.6}
\tilde{J}_0=\mathbf{E}\bigg[\int^{T}_{0}e^{-\tilde{r}t}\left\{-a_{0}\hat{u}_{0}^{2}+(\hat{x}_{0}-l_{0}\bar{x}+b_{0})^{2}\right\}dt\bigg],
\end{equation}
where $\tilde{r}=r+k$, then the third party can punish the defection by raising the interest rate $\tilde{r}$ of the leader.
If
\begin{equation}\label{344}
-a_{0}\hat{u}_{0}^{2}+(\hat{x}_{0}-l_{0}\bar{x}+b_{0})^{2}<\infty,
\end{equation}
then there exists a factor $k>0$ large enough such that the payoff after defection is less than non-defection payoff, that is, $\tilde{J}_{0}<J^*_0(u_0^*)$.

Now we claim that the  \eqref{5067} is a sufficient condition for \eqref{344}.
Notably, the third party does not raise the interest rate of followers. Hence there is no effect on $u_1^*(t)$, $\bar{x}_1^*(t)$ and $x_0^*(t)$, except $\zeta(t)$ and $\hat{x}_0(t)$ satisfying
\begin{align*}
  d\zeta &= \big((\tilde{r}-A_{0})\zeta(t)-2(\hat{x}_{0}(t)-l_{0}\bar{x}^*(t)+b_{0})\big)dt+Z_tdW_t^0, \\
  d\hat{x}_{0} &= \big(A_{0}\hat{x}_{0}(t)+\frac{B^{2}_{0}}{2a_{0}}\zeta(t)+C_{0}\bar{x}^*(t)\big)dt+\sqrt{x_0(1-x_0)}dW^0_t.
\end{align*}
Different from the dynamic game, we describe the  appreciated penalties in terms of the order of interest rate $\tilde{r}$. Specifically, we just need
$$O(\hat{u}_{0}(t))<O(e^{\frac{\tilde{r}}{2}t}).$$
Combined the gain function \eqref{7.6} with \eqref{7.11}, it is clearly that
\begin{equation}\label{O}
O(\zeta(t))<O(e^{\frac{\tilde{r}}{2}t}).
\end{equation}
 In fact, since the expectation of stochastic integral term is zero, this implies that the diffusion terms are invalid. Thus we just consider the ordinary differential equation
\begin{equation}\label{7.12}
 d\zeta = \big((\tilde{r}-A_{0})\zeta(t)-2(\hat{x}_{0}(t)-l_{0}\bar{x}^*(t)+b_{0})\big)dt,
\end{equation}
and ignore the terms unrelated to $\tilde{r}$. Using the method of constant variation to \eqref{7.12} with the terminal condition $\zeta(T)=0$, we get
\begin{align*}
\zeta(t) 
=2e^{(\tilde{r}-A_{0})t}\int^{T}_{t}e^{-(\tilde{r}-A_{0})s}(\hat{x}_{0}-l_{0}\bar{x}^*+b_{0})ds.
\end{align*}
Similarly, we can also obtain
$$\hat{x}_{0}(t)=e^{A_{0}t}\int^{t}_{0}e^{-A_{0}s}\left(\frac{B^2_{0}}{2a_{0}}\zeta(s)+c_{0}\bar{x}^*(s)\right)ds+Ce^{A_{0}t}.$$
It is clearly that $$O(\zeta(t))=O(\hat{x}_{0}(t)).$$
However, the condition \eqref{O} implies that the growth rate of knowledge with the interest rate $\tilde{r}$ should be slower than exponential growth, which is a natural condition. This is the reason that we give the condition for the knowledge stock $\hat{x}_0$.
Hence we can obtain a low bound of $k$ depending on $T$ by numerical calculations. The result follows.

\end{proof}
\section{CONCLUDING REMARKS}
In this paper, we obtain the Stackelberg equilibrium  by using the Pontryagin maximum principle and variational method for three different Stackelberg games, which are the discrete-time games, the dynamic games, and the MFG models. The time inconsistency of the Stackelberg equilibrium is originated from the games' structure itself. Time inconsistency in dynamic decision-making is often observed in social systems and daily life. Here we introduce the third party, the duty of which is achieving the Stackelberg equilibrium by supervising the leader's implementation and imposing penalties for the defection with the discount factor. Then we obtain different sufficient conditions of the factors in different game models to make the Stackelberg equilibrium credible. This discussion is similar to an inverse game problem, except that the third party here is not all players. A similar role of the third party is the industry association, government department, or banking institution. Although the forms of the factor are different, these results are consistent. This provides a theoretical basis for the game rules.

There are some advantages to the third party intervention. Firstly, compared with the known precommitment and equilibrium strategies, Stackelberg equilibrium may enable to gain more benefits for all players. Thus introducing a third party is a good approach for leaders and followers to implement the Stackelberg equilibrium strategy, even if players have to pay a certain fee. It can be regarded as
a modified naive strategy.
Secondly, the idea of ``discount'' punishment is inspired by the trigger strategy.
For the trigger strategy, all followers will adopt the punishment strategies after a defection in the Nash game. In this paper, the third party will punish the defector by discounting his payoff in the Stackelberg game. It makes punishment be immediately without a margin of payoff by delay, and the effectiveness of the discount will not even depend on the strategy of charging or sacrificing other followers. Thirdly, the third party can prevent defection effectively by raising the interest rate of the leader. It has practical significance.
\section{APPENDIX}
In this section, let $\lambda(0):=\tilde{\lambda}$ , and provide some specific values for \eqref{equ110} as follows.
\begin{align*}
 A_{1}&= e^{(s_1-r)t}\bigg\{\frac{-4\tilde{\lambda}\alpha_{2}q_{1}}{{\Delta}bs_{1}}+\frac{\alpha^2_1q^2_1}{{\Delta}b^2s_{1}^2}
 -\frac{\alpha_1\alpha_2q_1q_2}{{\Delta}b^2s_1s_2}+\frac{16\gamma(a+\bar{c}_1)}{\sqrt{{\Delta}}}\left(\tilde{\lambda}-\frac{\alpha_2}{4bs_1}\right)\\
 &\quad+\frac{\gamma^2}{{\Delta}}
 \left(\tilde{\lambda}\frac{4\alpha_2}{bs_1}-\frac{\alpha_2^2}{b^2s_1^2}+\frac{\alpha_1\alpha_2}{b^2s_1s_2}\right)\bigg\}, \\
 A_{2}&= e^{(s_2-r)t}\bigg\{\frac{4\tilde{\lambda}\alpha_1q_{1}}{{\Delta}bs_{2}}+\frac{{\alpha}^2_1q^2_2}{{\Delta}b^2s_{2}^2}
 -\frac{\alpha_1\alpha_2q_1q_2}{{\Delta}b^2s_1s_2}-\frac{16\gamma(a+\bar{c}_1)}{\sqrt{{\Delta}}}\left(\tilde{\lambda}-\frac{\alpha_1}{4bs_2}\right)\\
 &\quad +\frac{\gamma^2}{{\Delta}}\left(\frac{\alpha_1\alpha_2}{b^2s_1s_2}-\frac{4\alpha_1\tilde{\lambda}}{bs_2}-\frac{\alpha_1^2}{b^2s_2^2}\right)\bigg\},\\
 A_{3}&=e^{(2s_1-r)t}\bigg\{\frac{-8\tilde{\lambda}^2q^2_{1}}{{\Delta}}-\frac{\alpha^2_1q^{2}_1}{2{\Delta} b^2 s_{1}^2}
 +\frac{8\gamma^{2}\tilde{\lambda}^2}{{\Delta}}+\frac{\gamma^{2}\alpha_2^2}{2{\Delta} b^2 s^2_1}\bigg\},\\
 A_{4}&= e^{(2s_2-r)t}\bigg\{\frac{-8\tilde{\lambda}^2q^2_{2}}{{\Delta}}-\frac{\alpha^2_1q^{2}_2}{2{\Delta} b^2 s_{2}^2}
 +\frac{8\gamma^{2}\tilde{\lambda}^2}{{\Delta}}+\frac{\gamma^{2}\alpha_1^2}{2{\Delta} b^2 s^2_2}\bigg\},\\
 A_{5}&= e^{(s_1+s_2-r)t}\bigg\{\frac{16\tilde{\lambda}^2q_{1}q_{2}}{{\Delta}}+\frac{\alpha_1\alpha_2q_1q_2}{{\Delta} b^2 s_1s_{2}}-\frac{\alpha_1\alpha_2\gamma^2}{{\Delta} b^2s_1s_2}
 -\frac{16\tilde{\lambda}^2\gamma^2}{{\Delta}}\bigg\},\\
  A_{6}&= e^{-rt}\bigg\{\frac{\alpha_1\alpha_2q_1q_2}{{\Delta}b^2s_{1}s_2}-\frac{4\tilde{\lambda}\alpha_{1}q_{2}}{{\Delta}bs_{2}}-\frac{4\tilde{\lambda}\alpha_2q_{1}}{{\Delta}bs_{1}}
 -\frac{\alpha^2_1q^2_1}{2{\Delta}b^2s_{1}^2}-\frac{\alpha^2_1q^2_2}{2{\Delta}b^2s_{2}^2}+8(a+\bar{c}_1)^2
 \\
 &\quad +\frac{4\gamma\alpha_2(a+\bar{c}_1)}{\sqrt{{\Delta}}bs_1}
 -\frac{4\gamma\alpha_1(a+\bar{c}_1)}{\sqrt{{\Delta}}bs_2}
 +\frac{4\gamma^2\alpha_2\tilde{\lambda}}{{\Delta} bs_1}+\frac{4\gamma^2\alpha_1\tilde{\lambda}}{{\Delta} bs_2}
 +\frac{\alpha^2_1\gamma^2}{2{\Delta}b^2s_1^2}\\
&\quad +\frac{\alpha^2_1\gamma^2}{2{\Delta}b^2s_2^2}
 -\frac{\alpha_1\alpha_2\gamma^2}{{\Delta}b^2s_1s_2}\bigg\},\\
 A_{7}&= e^{(s_1-r-k)t}\bigg\{
 \frac{\alpha^2_1q^2_1}{8{\Delta}b^2s_{1}^2}-\frac{\alpha_1\alpha_2q_1q_2}{8{\Delta}b^2s_{1}s_2} -\frac{\alpha_2q_1\tilde{\lambda}}{2{\Delta}bs_1}
 +\frac{18\gamma(a+\bar{c}_1)}{\sqrt{{\Delta}}}\left(\frac{\alpha_2}{4bs_1}-\tilde{\lambda}\right)\\
 &\quad +\frac{9\gamma^2}{{\Delta}}\left(\frac{\alpha^2_1}{8b^2s^2_1}-\frac{\alpha_1\alpha_2}{8b^2s_1s_2}-\frac{\alpha_2\tilde{\lambda}}{2bs_1}\right)
 +\frac{6q_1(a+\bar{c}_1)}{\sqrt{{\Delta}}}\left(\frac{\alpha_2}{4bs_1}-\tilde{\lambda}\right)\\
 &\quad +\frac{6\gamma}{{\Delta}}(q_1+1)\left(\frac{\alpha_2}{4bs_1}-\tilde{\lambda}\right)\left(\frac{\alpha_2}{4bs_1}-\frac{\alpha_1}{4bs_2}\right)\bigg\},\\
  A_{8}&= e^{(s_2-r-k)t}\bigg\{
 \frac{\alpha^2_1q^2_2}{8{\Delta}b^2s_{2}^2}-\frac{\alpha_1\alpha_2q_1q_2}{8{\Delta}b^2s_{1}s_2} +\frac{\alpha_1q_2\tilde{\lambda}}{2{\Delta}bs_2}
 +\frac{(18\gamma+6q_2)(a+\bar{c}_1)}{\sqrt{{\Delta}}}\left(\tilde{\lambda}-\frac{\alpha_1}{4bs_2}\right)\\
 &\quad +\frac{9\gamma^2}{{\Delta}}\left(\frac{\alpha^2_1}{8b^2s^2_2}+\frac{\alpha_1\tilde{\lambda}}{2bs_2}-\frac{\alpha_1\alpha_2}{8b^2s_1s_2}\right)
 +\frac{6\gamma}{{\Delta}}(q_2+1)\left(\tilde{\lambda}-\frac{\alpha_1}{4bs_2}\right)\left(\frac{\alpha_2}{4bs_1}-\frac{\alpha_1}{4bs_2}\right)
 \bigg\},\\
A_{9}&=e^{(2s_1-r-k)t}\bigg\{
 \frac{-\tilde{\lambda}^2q^2_1}{{\Delta}}-\frac{\alpha^2_1q^2_1}{16{\Delta}b^2s^2_{1}} -\frac{9\gamma^2}{{\Delta}}\left(\tilde{\lambda}^2+\frac{\alpha^2_1}{16b^2s^2_1}\right)-\frac{6\gamma q_1}{{\Delta}}\left(\frac{\alpha_2}{4bs_1}-\tilde{\lambda}^2\right)\bigg\},\\
A_{10}&=e^{(2s_2-r-k)t}\bigg\{
 \frac{-\tilde{\lambda}^2q^2_2}{{\Delta}}-\frac{\alpha^2_1q^2_2}{16{\Delta}b^2s^2_{2}} -\frac{9\gamma^2}{{\Delta}}\left(\tilde{\lambda}^2+\frac{\alpha^2_1}{16b^2s^2_2}\right)+\frac{6\gamma q_2}{{\Delta}}\left(\frac{\alpha_1}{4bs_2}-\tilde{\lambda}^2\right)\bigg\},\\
 A_{11}&=e^{(s_1+s_2-r-k)t}\bigg\{
 \frac{2\tilde{\lambda}^2q_1q_2}{{\Delta}}+\frac{\alpha_1\alpha_2q_1q_2}{8{\Delta}b^2s_1s_{2}} +\frac{9\gamma^2}{{\Delta}}\left(2\tilde{\lambda}^2+\frac{\alpha_1\alpha_2}{8b^2s_1s_2}\right)\\
 &\quad+\frac{6\gamma}{{\Delta}}(q_1+q_2)\left(\frac{\alpha_2}{4bs_1}-\tilde{\lambda}\right)\left(\frac{\alpha_1}{4bs_1}-\tilde{\lambda}\right)\bigg\},\\
 A_{12}&= e^{-(r+k)t}\bigg\{\frac{\alpha_1\alpha_2q_1q_2}{8{\Delta}b^2s_1s_2}-
 \frac{\tilde{\lambda}\alpha_1 q_2}{2{\Delta}bs_{2}}-\frac{\tilde{\lambda}\alpha_2 q_1}{2{\Delta}bs_{1}} -\frac{\alpha^2_1q_1^2}{16{\Delta}b^2s^2_1}-\frac{\alpha^2_1q_2^2}{16{\Delta}b^2s^2_2}-9(a+\bar{c}_{1})^2\\
&\quad-\frac{18\gamma(a+\bar{c}_1)}{\sqrt{{\Delta}}}
 \left(\frac{\alpha_2}{4bs_1}-\frac{\alpha_1}{4bs_2}\right)-\frac{6(a+\bar{c}_1)}{\sqrt{{\Delta}}}\left(\frac{\alpha_2q_1}{4bs_1}-\frac{\alpha_1q_2}{4bs_2}\right)\\
&\quad -\frac{9\gamma^2}{{\Delta}}
 \left(\frac{\alpha_2\tilde{\lambda}}{2bs_1}+\frac{\alpha_1\tilde{\lambda}}{2bs_2}+\frac{\alpha^2_1}{16b^2s^2_1}
 +\frac{\alpha^2_1}{16b^2s^2_2}-\frac{\alpha_1\alpha_2}{8b^2s_1s_2}\right)-\frac{6\gamma}{\sqrt{{\Delta}}}\left(\frac{\alpha_2}{4bs_1}-\frac{\alpha_1}{4bs_2}\right)^2\bigg\}.
 \end{align*}
\\*

\noindent{\bf{Acknowledgments.}}
The authors thank sincerely to the referees for their careful reading and valuable comments.
\\*

\noindent{\bf{Disclosure statement.}} The authors declare that they have no conflict of interest.
\\*


\addcontentsline{toc}{section}{References} 
\end{document}